\theoremstyle{plain}
\newtheorem{thm}{Theorem}[section]
\newtheorem{cor}[thm]{Corollary}
\newtheorem{prop}[thm]{Proposition}
\newtheorem{lem}[thm]{Lemma}
\newtheorem{conj}[thm]{Conjecture}
\theoremstyle{definition}
\theoremstyle{remark}
\newtheorem{rem}[thm]{Remark}
\newtheorem{nota}[thm]{Notation}
\begin{document}

\title{Invariant and hyperinvariant subspaces for amenable operators}
\title{Invariant and hyperinvariant subspaces for amenable operators}

\author[L. Y. Shi]{Luo Yi Shi}

\address{Department of Mathematics\\Tianjin Polytechnic University\\Tianjin 300160\\P.R. CHINA}

\email{sluoyi@yahoo.cn}

\author[Y. J. Wu]{YU Jing Wu}

\address{Tianjin Vocational Institute \\Tianjin 300160\\P.R. CHINA}

\email{wuyujing111@yahoo.cn}

\author[Y.Q. Ji]{You Qing Ji}

\address{Department of Mathematics\\Jilin University\\Changchun 130012\\P.R. CHINA}

\email{jiyq@jlu.edu.cn}

\thanks{Supported by NCET(040296), NNSF of China(10971079) and
        the Specialized Research Fund for the Doctoral Program
        of Higher Education(20050183002) }

\date{7.04. 2010}

 \subjclass[2000]{47C05 (46H35 47A65 47A66 47B15)}

\keywords{Amenable; Invariant subspaces; Hyperinvariant subspaces; Reduction property}

\begin{abstract}There has been a long-standing conjecture in
Banach algebra that every amenable operator is similar to a normal operator. In this paper, we study the
structure of amenable operators on Hilbert spaces. At first, we show that the conjecture is equivalent to  every
non-scalar amenable operator has a non-trivial hyperinvariant subspace and equivalent to every amenable operator
is similar to a reducible operator and has a non-trivial invariant subspace; and then, we give two
decompositions for amenable operators, which supporting the conjecture.

\end{abstract}

\maketitle

\section{Introduction}

Throughout this paper,  $\mathfrak{H}$ denotes a complex separable infinite-dimension Hilbert space and
$\mathfrak{B}(\mathfrak{H})$ denotes the bounded linear operators on $\mathfrak{H}$. For an algebra
$\mathfrak{A}$ in $\mathfrak{B}(\mathfrak{H})$, we write $\mathfrak{A}'$ for the commutant of $\mathfrak{A}$
(i.e., $\mathfrak{A}'=\{B\in\mathfrak{B}(\mathfrak{H}), BA=AB ~~\textup{for all} A\in \mathfrak{A}\})$ and
$\mathfrak{A}''$ for the double commutant of  $\mathfrak{A}$ (i.e., $\mathfrak{A}''=(\mathfrak{A}')'$). We also
write Lat$\mathfrak{A}$ for the collection of those subspaces which are invariant for every operator in
$\mathfrak{A}$. We say $\mathfrak{A}$ is {\it completely reducible} if for every subspace $M$ in
Lat$\mathfrak{A}$ there exists $N$ in Lat$\mathfrak{A}$ such that $\mathfrak{H}=M\dot{+}N$ (i.e., $M\cap
N=\{0\}$ and $\mathfrak{H}$ is the algebraic direct sum of $M$ and $N$); $\mathfrak{A}$ is {\it reducible} if
for every subspace $M$ in Lat$\mathfrak{A}$ we have $M^\bot$ (the orthogonal complement of $M$) in
Lat$\mathfrak{A}$; $\mathfrak{A}$ is {\it transitive} if Lat$\mathfrak{A}=\{\{0\}, \mathfrak{H}\}$. If
$T\in\mathfrak{B}(\mathfrak{H})$, we say that a subspace $M$ of $\mathfrak{H}$ is a {\it hyperinvariant
subspace} for $T$ if $M$ is invariant under each operator in $\mathfrak{A}_T'$; $M$ is a {\it reducible
subspace} for $T$ if $M, M^\bot\in$ Lat$T$.

The concept of amenable Banach algebras was first introduced by B. E. Johnson in \cite{BE1972}. Suppose that
$\mathfrak{A}$ is a Banach algebra. A {\it Banach $\mathfrak{A}$-bimodule} is a Banach space $X$ that is also an
algebra $\mathfrak{A}$-bimodule for which there exists a constant  $K>0$ such that $||a\cdot x||\leq K
||a||||x||$ and $||x\cdot a||\leq K ||a||||x||$ for all $a\in \mathfrak{A}$ and $x\in X$. We note that $X^*$,
the dual of $X$, is a Banach $\mathfrak{A}$-bimodule with respect to the dual actions
$$ [a \cdot f](x)=f(x \cdot a), [f \cdot a](x)=f(a \cdot x), a\in \mathfrak{A}, x\in X,
f\in X^*.$$
 Such a Banach $\mathfrak{A}$-bimodule is called a {\it dual
$\mathfrak{A}$-bimodule}.

 A {\it derivation} $D : \mathfrak{A}\rightarrow X$ is a continuous linear map such that $D(ab) =
a \cdot D(b) + D(a)\cdot b$, for all $a, b \in \mathfrak{A}$. Given $x\in X$, the {\it inner derivation}
$\delta_x : \mathfrak{A}\rightarrow X$, is defined by $\delta_x (a) = a\cdot x- x \cdot a$.

 According to Johnson¡¯s
original definition, a Banach algebra $\mathfrak{A}$ is {\it amenable} if every
 derivation from $\mathfrak{A}$ into the dual  $\mathfrak{A}$-bimodule
$X^*$ is inner for all Banach $\mathfrak{A}$-bimodules $X$. If $T\in \mathfrak{B}(\mathfrak{H})$, denote the
norm-closure of span$\{T^k: k\in \{0\}\cup \mathbb{N}\}$ by $\mathfrak{A}_T$, where $\mathbb{N}$ is the set of
natural numbers, $T$ is said to be an {\it amenable operator}, if $\mathfrak{A}_T$ is an amenable Banach
algebra. Ever since its introduction, the concept of amenability has played an important role in research in
Banach algebras, operator algebras and harmonic analysis. There has been a long-standing conjecture in the
Banach algebra community, stated as follows:

\begin{conj}\label{conj 1}
A commutative Banach subalgebra of $\mathfrak{B}(\mathfrak{H})$ is amenable if and only if it is similar to a
$C^*$-algebra.
\end{conj}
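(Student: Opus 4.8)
The plan is to treat the two implications separately, since only one of them carries real content. The ``if'' direction is soft: suppose $\mathfrak{A}$ is similar to a $C^*$-algebra, say $S\mathfrak{A}S^{-1}=\mathfrak{B}$ is a (necessarily commutative) $C^*$-algebra for some invertible $S\in\mathfrak{B}(\mathfrak{H})$. Every commutative $C^*$-algebra is nuclear, hence amenable; and amenability is an invariant of bounded Banach-algebra isomorphisms, since such an isomorphism transports bimodules to bimodules with equivalent norms and carries derivations to derivations and inner derivations to inner derivations. The map $B\mapsto S^{-1}BS$ is exactly such an isomorphism $\mathfrak{B}\to\mathfrak{A}$, so $\mathfrak{A}$ inherits amenability from $\mathfrak{B}$. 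I would spell out the module-transport argument once and then invoke it.

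The substance is the converse: an amenable commutative $\mathfrak{A}$ should be similar to a $C^*$-algebra, and my approach would be to manufacture an equivalent Hilbertian inner product on $\mathfrak{H}$ under which $\mathfrak{A}$ becomes self-adjoint. The first step is to extract the structural data one can average with: a bounded approximate identity and, more usefully, a virtual diagonal $M\in(\mathfrak{A}\hat\otimes\mathfrak{A})^{**}$, equivalently an invariant mean $m$ on the bounded functions of $a$ over the unit ball of $\mathfrak{A}$. The second step, in the spirit of the Dixmier--Day unitarization, is to average the sesquilinear forms $(x,y)\mapsto\langle a x, a y\rangle$; for each fixed pair $x,y$ this is a bounded function of $a$, so applying the mean yields
\[
\langle x,y\rangle_{0}=m_{a}\,\langle a x, a y\rangle,
\]
an $\mathfrak{A}$-invariant positive sesquilinear form. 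If $\langle\cdot,\cdot\rangle_0$ is equivalent to the original inner product, the associated positive invertible $S$ with $\langle x,y\rangle_0=\langle Sx,Sy\rangle$ conjugates $\mathfrak{A}$ into a self-adjoint, hence $C^*$, algebra.

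An equivalent and, to my mind, more tractable route uses that amenability forces the \emph{total reduction property}: there is a constant $K$ so that for every $\mathfrak{A}$-invariant subspace $M\subseteq\mathfrak{H}$, and the same for every ampliation of $\mathfrak{A}$, there is an idempotent $E$ commuting with $\mathfrak{A}$, with range $M$ and $\|E\|\le K$. Uniform boundedness of these commuting idempotents is precisely the input needed to symmetrize the inner product so that every invariant subspace becomes orthogonally reducing, which is the defining feature of a self-adjoint algebra on $\mathfrak{H}$. For the singly generated case $\mathfrak{A}=\mathfrak{A}_T$, Gelfand theory identifies $\mathfrak{A}_T$ with a commutative function algebra on its maximal ideal space, and the target becomes: show this algebra is similar to $C(\sigma(T))$, with the reduction property supplying the abundance of complemented invariant subspaces to average against.

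The main obstacle is exactly the step I glossed over in both routes: passing from the \emph{uniformly bounded} supply of commuting idempotents, or from the formal averaged form $\langle\cdot,\cdot\rangle_0$, to a genuinely \emph{equivalent} inner product, i.e. to a similarity $S$ bounded below as well as above. For a uniformly bounded \emph{group} of invertibles the Dixmier argument closes, because one controls both $\|U_g\|$ and $\|U_g^{-1}\|$ and so gets two-sided bounds on $\langle\cdot,\cdot\rangle_0$; but an amenable operator algebra carries only a bounded approximate identity, not a group of invertibles, so the lower bound on $S$ is not automatic. This is why I expect the general conjecture to resist a direct attack, and why the productive move is the reformulation announced in the abstract: reduce ``similar to a $C^*$-algebra'' to the existence of a non-trivial hyperinvariant subspace for every non-scalar amenable operator, together with similarity to a reducible operator. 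That recasts the analytic obstruction as an invariant-subspace problem, in which the presence of even one complemented invariant subspace can be bootstrapped.
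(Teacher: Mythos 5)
The statement you were asked to prove is Conjecture 1.1, and the paper itself never proves it: it is presented as a long-standing open problem, and the paper's actual contribution (Theorems 2.6 and 2.9, plus the decompositions of Section 3) consists of equivalent reformulations and structural results \emph{supporting} the conjecture, not a proof of it. Your proposal, read carefully, also does not prove it --- and you concede as much in your final paragraph. What you actually establish is only the soft direction: an algebra similar to a (necessarily commutative) $C^*$-algebra is amenable, since commutative $C^*$-algebras are amenable and amenability passes through bounded algebra isomorphisms; this is in effect the paper's Lemma 2.4, quoted from Farenick--Forrest--Marcoux. For the substantive direction you outline two averaging schemes and then acknowledge that both fail at the decisive step. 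That failure is not a technicality to be patched; it is the entire content of the conjecture.

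Concretely, the gaps are these. First, the ``invariant mean $m_a$ over the unit ball of $\mathfrak{A}$'' is not an object that Banach-algebra amenability provides: amenability yields a virtual diagonal in $(\mathfrak{A}\widehat{\otimes}\mathfrak{A})^{**}$, not a translation-invariant mean on bounded functions on the ball, because the ball is not a group and multiplication by elements of $\mathfrak{A}$ does not preserve it. Even taken formally, the averaged form $\langle x,y\rangle_0$ has no lower bound and may degenerate, so it need not induce an equivalent inner product; the Dixmier--Day argument closes only for a uniformly bounded \emph{group of invertibles}, which an amenable operator algebra does not supply. Second, the route through the total reduction property trades one open problem for another: Gifford's results (the paper's Lemma 2.2) do give uniformly bounded commuting idempotents, but the implication ``total reduction property $\Rightarrow$ similar to a $C^*$-algebra'' was proved by Gifford only for algebras of compact operators and is open in general. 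Your closing suggestion --- recast the conjecture as an invariant/hyperinvariant-subspace problem --- is exactly what the paper does: Theorem 2.6 shows the conjecture for singly generated algebras is equivalent to every non-scalar amenable operator having a nontrivial hyperinvariant subspace, and Theorem 2.9 gives the version with reducibility plus a nontrivial invariant subspace. So where your proposal is sound it reproduces the paper's reformulation strategy, and where it attempts more, it stops at precisely the step no one has carried out. For what it is worth, the conjecture in this generality was later refuted by Choi, Farah and Ozawa (2014), who exhibited a commutative amenable subalgebra of $\mathfrak{B}(\mathfrak{H})$ not isomorphic to any $C^*$-algebra; so no blind proof of the full statement could have succeeded, and the singly generated case addressed by the paper is where the question retains its interest.
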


 One of the
first result in this direction is due to Willis \cite{W1995}. Willis showed that if $T$ is an amenable compact
operator, then $T$ is similar to a normal operator. In \cite{G2006} Gifford studied the reduction property for
operator algebras consisting of compact operators and showed that if such an algebra is amenable then it is
similar to a $C^*$-algebras. In the recent papers \cite{F2005}, \cite{F2007} Farenick, Forrest and Marcoux
showed that  if $T$ is similar to a normal operator, then $\mathfrak{A}_T$ is amenable if and only if
$\mathfrak{A}_T$ is similar to a $C^*$-algebra and the spectrum of $T$ has connected complement and empty
interior; If $T$ is a triangular operator with respect to an orthonormal basis of $\mathfrak{H}$, then
$\mathfrak{A}_T$ is amenable if and only if $T$ is similar to a normal operator  whose spectrum has connected
complement and empty interior. For further details see \cite{F2005} and \cite{F2007}.

In this paper, we give the characterization of the structure of amenable operators. At first, we use the
reduction theory of von Neumann to give two equivalent descriptions for Conjecture 1.1; and then, we give two
decompositions for amenable operators, which supporting the Conjecture 1.1.

\vskip1cm
\section{An equivalent formulation of the conjecture 1.1}
In this section we use the reduction theory of von Neumann to give two equivalent descriptions for Conjecture
1.1. We obtain that  every amenable operator is similar to a normal operator if and only if every non-scalar
amenable operator has a non-trivial hyperinvariant subspace if and only if every amenable operator is similar to
a reducible operator and has a non-trivial invariant subspace.

In order to proof the main theorem, we need to introduce von Neumann's reduction theory \cite{S1967} and some
lemmas.

Let
$\mathfrak{H_1}\subseteq\mathfrak{H_2}\subseteq\cdots\subseteq\mathfrak{H_\infty}$
be a sequence of Hilbert spaces chosen once and for all,
$\mathfrak{H_n}$ having the dimension $n$. Let $\mu$ be a finite
positive regular measure defined on the Borel sets of a separable
metric space $\wedge$, and let $\{E_n\}_{n=1}^\infty$ be a
collection of disjoint Borel sets of $\wedge$ with union $\wedge$.
Then the symbol
$$\int_\wedge^\oplus \mathfrak{H}(\lambda)\mu(d\lambda)$$
denotes the set of all functions $f$ defined on $\wedge$ such that

(1)$f(\lambda)\in \mathfrak{H}_n\subseteq\mathfrak{H}_\infty$ if
$\lambda\in E_n$;

(2)$f(\lambda)$ is a $\mu$-measurable function with values in
$\mathfrak{H}_\infty$;

(3) $\int_\wedge^\oplus |f(\lambda)|^2\mu(d\lambda)<\infty$.

We put

(4)$(f, g)=\int_\wedge^\oplus (f(\lambda), g(\lambda))\mu(d\lambda).$ \\
The set of functions thus defined is
called the {\it direct integral Hilbert space with measure $\mu$ and dimension sets $\{E_n\}$} and denoted by
$\mathfrak{H}=\int_\wedge^\oplus \mathfrak{H}(\lambda)\mu(d\lambda)$.

An operator on $\mathfrak{H}$ is said to be {\it decomposable} if
there exists a strongly $\mu$-measurable operator-value function
$A(\cdot)$ defined on $\wedge$ such that
 $A(\lambda)$ is a
bounded operator on the space $\mathfrak{H}(\lambda)=\mathfrak{H}_n$ when $\lambda\in E_n$, and for all
$f\in\mathfrak{H}$, $(Af)(\lambda)=A(\lambda)f(\lambda)$. We write $A=\int_\wedge^\oplus
A(\lambda)\mu(d\lambda)$ for the equivalence class corresponding to $A(\cdot)$. If $A(\lambda)$ is a scalar
multiple of the identity on $\mathfrak{H}(\lambda)$ for almost all $\lambda$, then $A$ is called {\it diagonal}.
The collection of all diagonal operator is called the {\it diagonal algebra} of $\wedge$. In \cite{S1967}I.3,
Schwartz showed that an operator $A$ on Hilbert space $\mathfrak{H}=\int_\wedge^\oplus
\mathfrak{H}(\lambda)\mu(d\lambda)$ is decomposable if and only if $A$ belong to the commutant of the diagonal
algebra of $\wedge$. And $||A||=\mu-ess.sup_{\lambda\in\wedge}||A(\lambda)||$.

In \cite{A 1977}, Azoff, Fong and Gilfeather used von Neumann's reduction theory to define the reduction theory
for non-selfadjoint operator algebras: Fix a partitioned measure space $\wedge$ and let $\mathfrak{D}$ be the
corresponding diagonal algebra. Given an algebra $\mathfrak{A}$ of decomposable operators. Each operator
$A\in\mathfrak{A}$ has a decomposition $A=\int_\wedge^\oplus A(\lambda)\mu(d\lambda)$. Chosse a countable
generating set $\{A_n\}$ for $\mathfrak{A}$. let $\mathfrak{A}(\lambda)$ be the strongly closed algebra
generated by the $\{A_n(\lambda)\}$. $\mathfrak{A}\sim\int_\wedge^\oplus \mathfrak{A}(\lambda)\mu(d\lambda)$ is
called the {\it decomposition of} $\mathfrak{A}$ {\it respect to} $\mathfrak{D}$. A decomposition
$\mathfrak{A}\sim\int_\wedge^\oplus \mathfrak{A}(\lambda)\mu(d\lambda)$ of an algebra is said to be {\it
maximal} if the corresponding diagonal algebra is maximal among the abelian von Neumman subalgebras of
$\mathfrak{A}'$. The following lemma is a basic result in \cite{A 1977} which will be used in this paper.

\begin{lem}\label{Fong}(\cite{A 1977}, Theorem 4.1)
Let $\mathfrak{A}\sim\int_\wedge^\oplus
\mathfrak{A}(\lambda)\mu(d\lambda)$ be a decomposition of a
reductive algebra. Then almost all of $\{\mathfrak{A}(\lambda)\}$
are reducible. In particular, if the decomposition is maximal, then
almost all of the algebras $\{\mathfrak{A}(\lambda)\}$ are
transitive.
\end{lem}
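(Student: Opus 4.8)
The plan is to reduce everything to a study of \emph{decomposable} invariant projections together with a measurable selection argument. First I would record two facts, both immediate from the basic reduction theory recalled above. (i) A decomposable projection $P=\int_\wedge^\oplus P(\lambda)\mu(d\lambda)$ lies in $\mathrm{Lat}\,\mathfrak{A}$ if and only if $P(\lambda)\in\mathrm{Lat}\,\mathfrak{A}(\lambda)$ for almost every $\lambda$; indeed, invariance of $P(\lambda)$ under the generators $A_n(\lambda)$ integrates up to $AP=PAP$ for each generator $A_n$, hence for all of $\mathfrak{A}$, and the converse is obtained by restricting to fibers. (ii) Such a $P$ belongs to the commutant $\mathfrak{A}'$ if and only if $P(\lambda)\in\mathfrak{A}(\lambda)'$ almost everywhere; this is the standard fiberwise description of the commutant of a decomposable algebra relative to the diagonal algebra $\mathfrak{D}$. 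Throughout, recall that a subspace $M$ with projection $P$ is \emph{reducing} for an algebra exactly when $P$ lies in its commutant, so that ``$\mathfrak{A}$ reductive'' means precisely that every $P\in\mathrm{Lat}\,\mathfrak{A}$ satisfies $P\in\mathfrak{A}'$.

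For the first conclusion I would argue by contradiction. Assume $\mathfrak{A}$ is reductive and suppose the set $S=\{\lambda:\mathfrak{A}(\lambda)\text{ is not reducible}\}$ has positive measure. For each $\lambda\in S$ there is an invariant but non-reducing projection, i.e.\ some $P(\lambda)\in\mathrm{Lat}\,\mathfrak{A}(\lambda)$ with $P(\lambda)\notin\mathfrak{A}(\lambda)'$. Selecting such a $P(\lambda)$ measurably in $\lambda$ and setting $P(\lambda)=0$ off $S$ produces a decomposable projection $P$, which by (i) lies in $\mathrm{Lat}\,\mathfrak{A}$. Reductivity of $\mathfrak{A}$ then forces $P\in\mathfrak{A}'$, so by (ii) $P(\lambda)\in\mathfrak{A}(\lambda)'$ almost everywhere, contradicting the choice of $P(\lambda)$ on the positive-measure set $S$. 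Hence $\mu(S)=0$ and almost all fibers are reducible.

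For the ``in particular'' assertion, assume the decomposition is maximal, so $\mathfrak{D}$ is maximal among the abelian von Neumann subalgebras of $\mathfrak{A}'$. Since every decomposable operator commutes with $\mathfrak{D}$, any self-adjoint decomposable $B\in\mathfrak{A}'$ generates together with $\mathfrak{D}$ an abelian $*$-subalgebra of $\mathfrak{A}'$; maximality forces $B\in\mathfrak{D}$, that is, $B(\lambda)$ scalar almost everywhere. A selection argument dual to the one above then shows $\mathfrak{A}(\lambda)'$ has only scalar self-adjoint elements for almost every $\lambda$: otherwise, on a positive-measure set one could measurably choose non-scalar self-adjoint $B(\lambda)\in\mathfrak{A}(\lambda)'$ and assemble a self-adjoint decomposable $B\in\mathfrak{A}'\setminus\mathfrak{D}$, contradicting maximality. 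As $\mathfrak{A}(\lambda)'$ is a von Neumann algebra, this forces $\mathfrak{A}(\lambda)'=\mathbb{C}I$ a.e. Combined with reductivity of the fibers, every $M\in\mathrm{Lat}\,\mathfrak{A}(\lambda)$ is then reducing, so its projection lies in $\mathfrak{A}(\lambda)'=\mathbb{C}I$ and $M\in\{0,\mathfrak{H}(\lambda)\}$; thus $\mathfrak{A}(\lambda)$ is transitive for almost every $\lambda$.

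The genuinely delicate step—and the one I expect to be the main obstacle—is the measurable selection. The algebraic bookkeeping in facts (i), (ii) and the von Neumann algebra structure theory are routine, but to make the two contradiction arguments rigorous one must verify that the exceptional sets (``$\mathfrak{A}(\lambda)$ non-reducible'', respectively ``$\mathfrak{A}(\lambda)'$ has a non-scalar self-adjoint element'') are measurable, and that the required invariant non-reducing projection, respectively non-scalar commuting self-adjoint operator, can be chosen to depend measurably on $\lambda$. The hard part will be supplying this measurability, which rests on the theory of measurable fields of Hilbert spaces and operators and a von Neumann-type measurable selection theorem applied to the relevant Borel relations in $\lambda$.
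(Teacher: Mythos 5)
The paper itself offers no proof of this lemma: it is quoted directly from Azoff--Fong--Gilfeather (\cite{A 1977}, Theorem 4.1), so your reconstruction must be judged against that original argument, whose route it essentially follows (fiberwise characterization of decomposable invariant and reducing projections, then contradiction; in the maximal case, maximality of the diagonal algebra forcing decomposable self-adjoint elements of $\mathfrak{A}'$ into $\mathfrak{D}$). The genuine gap is the one you flag yourself and then defer: the measurable selection is never proved, and it is not a routine verification but the entire technical content of the theorem --- everything else in your write-up is algebraic bookkeeping. To close it you must (a) work over each dimension set $E_n$ separately and check that the relation consisting of pairs $(\lambda,P)$ with $P$ a projection on $\mathfrak{H}_n$, $PA_k(\lambda)P=A_k(\lambda)P$ for all $k$, and $PA_m(\lambda)\neq A_m(\lambda)P$ for some $m$, is a Borel subset of $E_n\times\mathcal{B}_1$, where $\mathcal{B}_1$ is the unit ball of $\mathfrak{B}(\mathfrak{H}_n)$ with its strong-operator Borel structure (here one uses that commuting with, respectively being invariant under, the countable generating set $\{A_k(\lambda)\}$ is equivalent to the same property for all of $\mathfrak{A}(\lambda)$); (b) conclude that the exceptional set $S$, being the projection of this Borel set onto $\wedge$, is analytic and hence $\mu$-measurable; and (c) invoke the Jankov--von Neumann measurable selection theorem to obtain a measurable choice $\lambda\mapsto P(\lambda)$, extended by $0$ off $S$. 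The same three steps, with self-adjoint contractions in place of projections, are needed for your second selection. As written, your text is a correct plan whose crux is missing.

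One further error, small but real: you assert that $\mathfrak{A}(\lambda)'$ is a von Neumann algebra. It need not be, because $\mathfrak{A}(\lambda)$ is in general not self-adjoint, and the commutant of a non-self-adjoint algebra need not be closed under adjoints. Hence the inference ``all self-adjoint elements of $\mathfrak{A}(\lambda)'$ are scalar, therefore $\mathfrak{A}(\lambda)'=\mathbb{C}I$'' is unjustified. Fortunately it is also unnecessary: the final step only applies scalarity of self-adjoint elements to the projection onto an invariant subspace $M$, which lies in $\mathfrak{A}(\lambda)'$ by the fiberwise reducibility established in the first part and is self-adjoint, so it is $0$ or $I$ and $M$ is trivial. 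Deleting the von Neumann algebra claim leaves your deduction of transitivity intact.
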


In \cite{G2006} Gifford studied the reduction property for operator
algebras and obtained the following result:

\begin{lem}\label{lem 1}(\cite{G2006} Lemma 4.4, Lemma 4.12)
If $\mathfrak{A}$ is a commutative  amenable operator algebra, then
$\mathfrak{A}',\mathfrak{A}''$ are complete reducible and there
exists $M\geq 1$ so that for any idempotent $p\in\mathfrak{A}''$
$||p||\leq M$.
\end{lem}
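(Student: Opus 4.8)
The plan is to run the whole argument off Johnson's characterization of amenability: $\mathfrak{A}$ is amenable exactly when it carries a bounded approximate diagonal. So I would first fix a net $m_\alpha=\sum_i a_i^\alpha\otimes b_i^\alpha$ in the projective tensor product $\mathfrak{A}\widehat{\otimes}\,\mathfrak{A}$ with $\sup_\alpha\|m_\alpha\|\le C$, with $\pi(m_\alpha)\to 1$ in the strong operator topology (here $\pi$ is the multiplication map), and with $a\cdot m_\alpha-m_\alpha\cdot a\to 0$ for every $a\in\mathfrak{A}$. The single tool used throughout is the averaging map $\Psi_\alpha(X)=\sum_i a_i^\alpha X b_i^\alpha$ on $\mathfrak{B}(\mathfrak{H})$; since $a\otimes b\mapsto aXb$ has norm at most $\|X\|$ on $\mathfrak{A}\widehat{\otimes}\,\mathfrak{A}$, one gets the uniform estimate $\|\Psi_\alpha(X)\|\le C\|X\|$, which is the source of all the norm bounds below. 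I would also record the inclusions $\mathfrak{A}\subseteq\mathfrak{A}''\subseteq\mathfrak{A}'$ forced by commutativity, so that $\mathfrak{A}''$ is itself abelian and every element of $\mathfrak{A}$ commutes with every element of $\mathfrak{A}'$.

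For the complete reducibility of $\mathfrak{A}''$, take $M\in\operatorname{Lat}\mathfrak{A}''$, hence $M\in\operatorname{Lat}\mathfrak{A}$, let $P$ be the orthogonal projection onto $M$, and set $e=\mathrm{WOT}\text{-}\lim_\alpha\Psi_\alpha(P)$ along a subnet. The invariance relation $aP=PaP$ for $a\in\mathfrak{A}$ forces $\mathrm{ran}\,e\subseteq M$, while $\pi(m_\alpha)\to 1$ together with $b_i^\alpha M\subseteq M$ gives $e|_M=\mathrm{id}$; thus $e$ is an idempotent with $\mathrm{ran}\,e=M$. Applying $\Psi_\alpha$ to $c\cdot m_\alpha-m_\alpha\cdot c\to 0$ shows $ce-ec=0$ for all $c\in\mathfrak{A}$, so $e\in\mathfrak{A}'=(\mathfrak{A}'')'$, whence $\mathfrak{H}=M\,\dot{+}\,\ker e$ with both summands in $\operatorname{Lat}\mathfrak{A}''$, and $\|e\|\le C$. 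The complete reducibility of $\mathfrak{A}'$ I would obtain by the parallel argument using $\operatorname{Lat}\mathfrak{A}'\subseteq\operatorname{Lat}\mathfrak{A}''$; the point demanding care is to place the complementing idempotent inside $\mathfrak{A}''=(\mathfrak{A}')'$ rather than merely in $\mathfrak{A}'$, which I would arrange by feeding the averaging the commutation $cT=Tc$ for $c\in\mathfrak{A}$, $T\in\mathfrak{A}'$ (valid since $\mathfrak{A}\subseteq\mathfrak{A}''$).

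The genuinely hard part is the uniform bound on idempotents in $\mathfrak{A}''$. The idempotents built above already satisfy $\|e\|\le C$, but an arbitrary idempotent $p\in\mathfrak{A}''$ need not be of that form: its range $M=\mathrm{ran}\,p$ and kernel both lie in $\operatorname{Lat}\mathfrak{A}'$, yet $\ker p$ is in general not $M^\bot$. My plan is to write $p=P+\xi$ with $P$ the orthogonal projection onto $M$ and $\xi=p-P$; one checks $\xi=P\xi(1-P)$, so $\xi$ lives in the corner bimodule $X=P\,\mathfrak{B}(\mathfrak{H})\,(1-P)$, which is a dual $\mathfrak{A}$-bimodule. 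From $pa=ap$ for $a\in\mathfrak{A}$ one gets
\[
D(a):=Pa(1-P)=a\cdot\xi-\xi\cdot a ,
\]
so that $D$ is a bounded derivation of $\mathfrak{A}$ into $X$ with $\|D\|\le 1$, implemented by $\xi$. Amenability produces an implementer $\eta$ with $\|\eta\|\le C''\|D\|\le C''$, where $C''$ depends only on $\mathfrak{A}$; since $\|P\|=1$ this already bounds the canonical idempotent $P+\eta$ by $1+C''$, uniformly in $M$. To bound the given $p$ itself I would exploit that $p$ commutes not just with $\mathfrak{A}$ but with all of $\mathfrak{A}'$, which yields $[b,\xi]=Pb(1-P)$ for every $b\in\mathfrak{A}'$ and so constrains the module-central ambiguity $\xi-\eta$.

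The main obstacle is exactly this last reconciliation together with the uniformity of the constant: an inner derivation has many implementers, and I must show that the extra commutation against the full commutant $\mathfrak{A}'$ removes enough of this freedom to bound $\xi$, and hence $\|p\|\le 1+C''$, by a single constant valid for every idempotent and every invariant subspace at once. This is where the dual-module formulation of amenability, rather than the mere existence of one bounded approximate diagonal, carries the argument, and I expect it to be the delicate heart of the proof.
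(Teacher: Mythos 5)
First, a point of reference: the paper does not prove this lemma at all --- it is imported verbatim from Gifford \cite{G2006} (Lemmas 4.4 and 4.12) --- so your attempt has to stand as a self-contained proof. Your approximate-diagonal averaging does correctly deliver one of the three assertions: for $M\in\textup{Lat}\mathfrak{A}''$ (indeed for any $M\in\textup{Lat}\mathfrak{A}$), the cluster point $e$ of $\Psi_\alpha(P)$ is an idempotent with $\textup{Ran}\,e=M$, $\|e\|\le C$, lying in $\mathfrak{A}'$, and since $\mathfrak{A}'=(\mathfrak{A}'')'$ the kernel of $e$ is $\mathfrak{A}''$-invariant; that argument is sound (modulo the standing hypothesis $\pi(m_\alpha)\to 1$ strongly, harmless when $I\in\mathfrak{A}$, as for $\mathfrak{A}_T$ in this paper). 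The first genuine gap is the complete reducibility of $\mathfrak{A}'$. There you must produce a complementing idempotent in $\mathfrak{A}''=(\mathfrak{A}')'$, and your proposed device --- ``feeding the averaging the commutation $cT=Tc$'' --- only yields the $\mathfrak{A}'$-bimodule property $\Psi(TXS)=T\Psi(X)S$ for $T,S\in\mathfrak{A}'$. Applied to $X=P$ this gives $Te-eT=\Psi(TP-PT)=-\Psi\bigl(PT(1-P)\bigr)$ for $T\in\mathfrak{A}'$, and this corner has no reason to vanish: $P$ is the \emph{orthogonal} projection onto a subspace that is invariant but in general not reducing for $T$, so $TP\neq PT$, and nothing forces $\Psi$ to kill $P\mathfrak{B}(\mathfrak{H})(1-P)$. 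Every output of the averaging lies in $\mathfrak{A}'$, never (for free) in $\mathfrak{A}''$; consequently $\textup{Ker}\,e$ is only known to be invariant under $\mathfrak{A}''$, not under $\mathfrak{A}'$. This --- the reduction property of the commutant --- is the genuinely hard half of Gifford's lemma, and your proposal contains no argument for it.

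The second gap you concede yourself: for an arbitrary idempotent $p\in\mathfrak{A}''$ you write $p=P+\xi$, exhibit $D=\delta_\xi$ with $\|D\|\le 1$, obtain a bounded implementer $\eta$, and then leave open exactly the step that matters, namely why $\|\xi\|$ (not merely $\|\eta\|$) is controlled; as written this proves nothing about $\|p\|$. Ironically, this part closes in three lines from your own first construction: $M:=\textup{Ran}\,p$ lies in $\textup{Lat}\mathfrak{A}$ because $p\in\mathfrak{A}''\subseteq\mathfrak{A}'$ commutes with $\mathfrak{A}$; the averaging then gives an idempotent $e\in\mathfrak{A}'$ with $\textup{Ran}\,e=M$ and $\|e\|\le C$; and since $p\in(\mathfrak{A}')'$ commutes with $e\in\mathfrak{A}'$, while $e$ fixes $M=\textup{Ran}\,p$ pointwise and $p$ fixes $M=\textup{Ran}\,e$ pointwise, we get $ep=p$ and $pe=e$, hence $p=ep=pe=e$ and $\|p\|\le C$. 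In your notation, $\xi=\eta$ automatically; there is no ``module-central ambiguity'' to resolve once one notes that commuting idempotents with the same range are equal. So the uniform bound is salvageable from what you already have, but the complete reducibility of $\mathfrak{A}'$ is not: that is the part which requires the real content of Gifford's Lemma 4.4, beyond approximate-diagonal averaging.
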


Assume $\mathfrak{A}$ is a operator algebra, let $P(\mathfrak{A})$ denote the idempotents in $\mathfrak{A}$ and
$\mathcal{P}(\mathfrak{A})$ denote the strongly closed algebra generated by $P(\mathfrak{A})$. We get the
following lemma:

\begin{lem}\label{lem 2}
If $\mathfrak{A}$ is a commutative amenable operator algebra, then $\mathcal{P}(\mathfrak{A''})$ is similar to
an abelian von Neumann algebra.
\end{lem}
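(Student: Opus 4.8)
The plan is to convert the uniform bound on idempotents supplied by Lemma \ref{lem 1} into a single similarity that simultaneously unitarizes all of them. First I would record that $\mathfrak{A}''$ is itself commutative: commutativity of $\mathfrak{A}$ gives $\mathfrak{A}\subseteq\mathfrak{A}'$, whence $\mathfrak{A}''=(\mathfrak{A}')'\subseteq\mathfrak{A}'$, and since every element of $\mathfrak{A}''$ then lies in $\mathfrak{A}'$ while also commuting with all of $\mathfrak{A}'$, any two elements of $\mathfrak{A}''$ commute. Consequently the set $P(\mathfrak{A}'')$ of idempotents of $\mathfrak{A}''$ consists of mutually commuting operators, and under the operations $p\wedge q=pq$, $p\vee q=p+q-pq$ and $p^{c}=I-p$ it forms a Boolean algebra $\mathcal{B}$ of idempotents lying in $\mathfrak{A}''$; note $I\in\mathfrak{A}''$ always, so $\mathcal{B}$ is unital. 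By Lemma \ref{lem 1} there is an $M\ge 1$ with $\|e\|\le M$ for every $e\in\mathcal{B}$.

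Second, I would promote this uniform bound on the idempotents to a uniform bound on the whole algebra they generate. Given finitely many elements of $\mathcal{B}$, I pass to the atoms $q_{1},\dots,q_{k}$ of the finite Boolean subalgebra they generate; these are mutually orthogonal idempotents summing to $I$, and every partial sum $\sum_{j\in J}q_{j}$ again lies in $\mathcal{B}$ and hence has norm at most $M$. Splitting scalars into real/imaginary and positive/negative parts then yields $\|\sum_{j}c_{j}q_{j}\|\le 4M\max_{j}|c_{j}|$. Thus the norm-closed subalgebra generated by $\mathcal{B}$ is the image of a bounded unital homomorphism $\pi$ of $C(\Omega)$, where $\Omega$ is the Stone space of $\mathcal{B}$, idempotents corresponding to characteristic functions of clopen sets.

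Third, and this is the technical heart, I would unitarize $\pi$. The functions of modulus one in $C(\Omega)$ form an abelian group $G$, on which $\pi$ restricts to a uniformly bounded representation on $\mathfrak{H}$. Since abelian groups are amenable, Dixmier's unitarization theorem (averaging the inner product against an invariant mean on $G$) produces a positive invertible operator $Q$ such that $Q^{1/2}\pi(u)Q^{-1/2}$ is unitary for every $u\in G$. Setting $S=Q^{1/2}$, the map $a\mapsto S\,\pi(a)\,S^{-1}$ is then a $*$-homomorphism; in particular $S e S^{-1}$ is a self-adjoint projection for every $e\in\mathcal{B}$.

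Finally I would pass to strong closures. Conjugation by the fixed invertible $S$ is a homeomorphism of $\mathfrak{B}(\mathfrak{H})$ for the strong operator topology, so $S\,\mathcal{P}(\mathfrak{A}'')\,S^{-1}$ is again strongly closed; being a unital, self-adjoint, commutative, strongly closed subalgebra of $\mathfrak{B}(\mathfrak{H})$, it is an abelian von Neumann algebra, and hence $\mathcal{P}(\mathfrak{A}'')$ is similar to one. The main obstacle is the third step: extracting one similarity $S$ valid simultaneously for the whole, generally infinite, Boolean algebra of idempotents, for which the amenability/invariant-mean averaging is precisely the right device; alternatively one may invoke the classical similarity theory for bounded Boolean algebras of projections on Hilbert space.
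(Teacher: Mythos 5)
Your proposal is correct, and its overall skeleton is the same as the paper's: extract the uniform bound $M$ on idempotents of $\mathfrak{A}''$ from Lemma \ref{lem 1}, produce a \emph{single} invertible $S$ making every idempotent of $\mathfrak{A}''$ self-adjoint, and then pass to strong closures. The only real difference is in how that single similarity is obtained: the paper's proof is a one-line citation of \cite[Corollary 17.3]{D1988} (the classical similarity theorem for uniformly bounded Boolean algebras of idempotents, the ``alternative'' you mention at the end), whereas you prove that theorem from scratch --- commutativity of $\mathfrak{A}''$, the Abel-summation bound $\|\sum_j c_j q_j\|\le 4M\max_j|c_j|$ via atoms of finite Boolean subalgebras, the resulting bounded unital homomorphism $\pi$ of $C(\Omega)$ for $\Omega$ the Stone space, and Dixmier unitarization of $\pi$ restricted to the amenable group of unimodular elements. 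Your route is longer but self-contained, and it makes explicit where amenability (of an abelian group, via an invariant mean) enters the unitarization; the paper's route buys brevity at the cost of outsourcing exactly this step. All the individual steps you give (including the fact that a bounded unital homomorphism carrying unitaries to unitaries is a $*$-homomorphism, and that conjugation by a fixed invertible is an SOT homeomorphism, so the conjugated strongly closed, unital, commutative $*$-algebra is an abelian von Neumann algebra) are sound.
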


\begin{proof}
By Lemma \ref{lem 1} and \cite[ Corollary 17.3]{D1988}, it follows that there exists $X\in
\mathfrak{B}(\mathfrak{H})$ such that $XpX^{-1}$ is selfadjoint for each $p\in P(\mathfrak{A''})$. Hence
$\mathcal{P}(\mathfrak{A''})$ is similar to a abelian von Neumann algebra.
\end{proof}

\begin{lem}\label{lem 3}(\cite{F2005})
Let $\mathfrak{A}$ and $\mathfrak{B}$ be Banach algebras and suppose
that $\varphi: \mathfrak{A}\longrightarrow\mathfrak{B}$ is a
continuous homomorphism with $\varphi(\mathfrak{A})$ dense in
$\mathfrak{B}$. If $\mathfrak{A}$ is amenable, then $\mathfrak{B}$
is amenable.
\end{lem}

\begin{nota}\label{nota}
From Lemma \ref{lem 2},\ref{lem 3} we always assume that $\mathcal{P}(\mathfrak{A}_T'')$ is a abelian von
Neumann algebra, and $\mathfrak{A}_T'$ is a reducible operator algebra in this section.
\end{nota}

Now we will proof the main result of this section:

\begin{thm}\label{thm1}
The following are equivalent:

\textup{(1)} Every amenable operator is similar to a normal operator;

\textup{(2)} Every non-scalar amenable operator has a non-trivial hyperinvariant subspace;

\textup{(3)} Every amenable Banach algebra which is generated by an operator is similar to a $C^*$-algebra.
\end{thm}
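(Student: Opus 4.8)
The plan is to establish the two equivalences (1)$\Leftrightarrow$(3) and (1)$\Leftrightarrow$(2), which together give the three-way equivalence; the substantial content is the implication (2)$\Rightarrow$(1), for which the reduction theory assembled above is designed. For (1)$\Rightarrow$(3), observe that an amenable Banach algebra generated by an operator is precisely $\mathfrak{A}_T$ for some amenable $T$; by (1) we may write $T=XNX^{-1}$ with $N$ normal, so $\mathfrak{A}_T=X\mathfrak{A}_N X^{-1}$, and the Farenick--Forrest--Marcoux description (\cite{F2005}, \cite{F2007}) shows that amenability of $\mathfrak{A}_T$ forces $\sigma(T)$ to have empty interior and connected complement, whence $\mathfrak{A}_N$ is a $C^*$-algebra and $\mathfrak{A}_T$ is similar to it. Conversely, if $\mathfrak{A}_T$ is similar to a $C^*$-algebra $\mathfrak{B}=Y\mathfrak{A}_TY^{-1}$, then $\mathfrak{B}$ is commutative (being a similarity image of the commutative $\mathfrak{A}_T$) and is singly generated by $S=YTY^{-1}$; since $\mathfrak{B}$ is self-adjoint, $S^*\in\mathfrak{B}$ commutes with $S$, so $S$ is normal and $T$ is similar to the normal operator $S$. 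For (1)$\Rightarrow$(2), if $T=XNX^{-1}$ with $N$ non-scalar normal, then $\sigma(N)$ has more than one point and a spectral projection $E(\sigma_0)$ for a proper nonempty Borel piece $\sigma_0$ lies in the von Neumann algebra generated by $N$, hence in $\{N\}''$; its range is a non-trivial hyperinvariant subspace for $N$, and since $B\mapsto X^{-1}BX$ carries $\{T\}'$ into $\{N\}'$, the subspace $X\,\mathrm{ran}\,E(\sigma_0)$ is a non-trivial hyperinvariant subspace for $T$.

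The heart of the matter is (2)$\Rightarrow$(1). Let $T$ be amenable; if $T$ is scalar it is normal, so assume $T$ is non-scalar and invoke Notation~\ref{nota} to assume $\mathfrak{A}_T'$ is reductive. Since $\mathfrak{A}_T$ is commutative we have $\mathfrak{A}_T\subseteq\mathfrak{A}_T'$, hence $\mathfrak{A}_T''=(\mathfrak{A}_T')'\subseteq\mathfrak{A}_T'$, so $\mathfrak{A}_T''$ commutes with itself and is abelian. Choose a maximal abelian von Neumann subalgebra $\mathfrak{D}$ of $\mathfrak{A}_T''=(\mathfrak{A}_T')'$ as the diagonal and form the corresponding maximal decomposition $\mathfrak{A}_T'\sim\int_\wedge^\oplus \mathfrak{A}_T'(\lambda)\mu(d\lambda)$. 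Because $T\in\mathfrak{A}_T''$ and $\mathfrak{A}_T''$ is abelian, $T$ commutes with $\mathfrak{D}$, so $T$ is decomposable, $T=\int_\wedge^\oplus T(\lambda)\mu(d\lambda)$. By Lemma~\ref{Fong}, almost every fiber $\mathfrak{A}_T'(\lambda)$ is transitive; and since $A\mapsto A(\lambda)$ is, for almost every $\lambda$, a continuous homomorphism of $\mathfrak{A}_T$ whose range is dense in $\mathfrak{A}_{T(\lambda)}$, Lemma~\ref{lem 3} shows that $T(\lambda)$ is amenable for almost every $\lambda$.

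Now fix such a generic $\lambda$. As elements of $\mathfrak{A}_T'$ commute with $T$, fiberwise $\mathfrak{A}_T'(\lambda)\subseteq\{T(\lambda)\}'=\mathfrak{A}_{T(\lambda)}'$. If $\dim\mathfrak{H}(\lambda)=\infty$ and $T(\lambda)$ were non-scalar, then by (2) $T(\lambda)$ would have a non-trivial subspace invariant under $\mathfrak{A}_{T(\lambda)}'\supseteq\mathfrak{A}_T'(\lambda)$, contradicting transitivity of $\mathfrak{A}_T'(\lambda)$; if $\dim\mathfrak{H}(\lambda)<\infty$, Burnside's theorem forces $\mathfrak{A}_T'(\lambda)=\mathfrak{B}(\mathfrak{H}(\lambda))$, so its commutant, which contains $T(\lambda)$, is scalar. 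In either case $T(\lambda)$ is scalar almost everywhere, so $T$ lies in the diagonal von Neumann algebra and is normal. The main obstacle is the measure-theoretic coordination: transitivity of the fibers, amenability of the fibers, and the absence of invariant subspaces must be arranged to hold on a common co-null set, and one must ensure that the infinite-dimensional fibers genuinely sit inside the fixed separable space $\mathfrak{H}_\infty$ so that hypothesis (2) is applicable there. A secondary point, needed to transport amenability by Lemma~\ref{lem 3}, is to verify that the fiber map $A\mapsto A(\lambda)$ is a continuous homomorphism with dense range onto $\mathfrak{A}_{T(\lambda)}$ off a single null set, which requires choosing the generating set and exceptional sets uniformly in $\lambda$.
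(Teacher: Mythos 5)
Your proposal is correct and follows essentially the same route as the paper: after invoking Notation~\ref{nota}, you form the maximal decomposition of $\mathfrak{A}_T'$ over the diagonal $\mathcal{P}(\mathfrak{A}_T'')$ (your maximal abelian von Neumann subalgebra $\mathfrak{D}$ of $\mathfrak{A}_T''$ coincides with it), obtain transitivity of almost every fiber from Lemma~\ref{Fong}, transport amenability to the fibers via Lemma~\ref{lem 3}, and use hypothesis (2) to force the fibers to be scalar, which is exactly the paper's argument for $(2)\Rightarrow(1)$. The measure-theoretic coordination you flag as the main obstacle is resolved in the paper precisely as you suggest, by running the countable family of rational polynomials through Schwartz's Lemmas I.3.1--I.3.2 on a single co-null set, and your Burnside treatment of finite-dimensional fibers (plus the explicit arguments for $(1)\Leftrightarrow(3)$ and $(1)\Rightarrow(2)$, which the paper simply cites to \cite{F2005}) fills in details the paper glosses over.
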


\begin{proof}

$(1)\Leftrightarrow(3)$ and $(1)\Rightarrow(2)$ is clear by
\cite{F2005}. Therefore, in order to establish the theorem it
suffices to show the implications $(2)\Rightarrow(1)$.

Assume (2), by Lemma \ref{lem 2} choose a maximal decomposition for $\mathfrak{A}_T'\sim \int_\wedge^\oplus
\mathfrak{A}_T'(\lambda)\mu(d\lambda)$ respect to the diagonal algebra $\mathcal{P}(\mathfrak{A}_T'')$.

Assume $T\sim \int_\wedge^\oplus T(\lambda)\mu(d\lambda)$ is the
decomposition for $T$. Let $\{p_n\}_{n=1}^\infty$ denote the all
rational polynomials. Then $p_n(T)\sim \int_\wedge^\oplus
p_n(T)(\lambda)\mu(d\lambda)$ is decomposable for all $n$ and there
exists a measurable
 $E\subseteq\wedge$ such that $\mu(\wedge-E)=0$ and for any $\lambda\in
 E$ we have
 $ p_n(T)(\lambda)= p_n(T(\lambda))$ and
$||p_n(T)(\lambda)||\leq ||p_n(T)||$ by \cite[Lemma I.3.1, I.3.2]{S1967}. Define a mapping $\varphi_\lambda :
\mathfrak{A}_{T}\rightarrow \mathfrak{A}_{T(\lambda)}$ by $\varphi_\lambda(p_n(T))=p_n(T(\lambda))$ for each
rational polynomial $p_n$ and $\lambda\in E$. Note that $||p_n(T(\lambda))||\leq ||p_n(T)||$ for each  rational
polynomial $p_n$ and furthermore,  $\{p_n(T)\}$ is dense in $\mathfrak{A}_{T}$. Hence, $\varphi_\lambda$ is
well-defined and $\varphi_\lambda$ a continuous homomorphism with $\varphi(\mathfrak{A}_T)$ dense in
$\mathfrak{A}_{T(\lambda)}$. By Lemma \ref{lem 3}, $T(\lambda)$ is amenable for almost all $\lambda$.

Now for almost all $\lambda$, $T(\lambda)$ is amenable and $\mathfrak{A}_T'(\lambda)\subseteq
\mathfrak{A}_{T(\lambda)}'$ and $\mathfrak{A}_T'(\lambda)$ is transitive by Lemma \ref{Fong}. Thus almost all of
$T(\lambda)$ are scalar operators, i.e. $T$ is a normal operator.
\end{proof}

\begin{cor}
Every amenable operator is similar to a normal operator if and only
if there exists a non-trivial idempotent in the double-commutant of
every non-scalar amenable operator.
\end{cor}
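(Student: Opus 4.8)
The plan is to deduce the corollary from Theorem \ref{thm1} by establishing, for each fixed non-scalar amenable operator $T$, the equivalence between the existence of a non-trivial hyperinvariant subspace for $T$ and the existence of a non-trivial idempotent in $\mathfrak{A}_T''$. Granting this per-operator equivalence, condition (2) of Theorem \ref{thm1} holds for every non-scalar amenable operator if and only if every such operator has a non-trivial idempotent in its double commutant; since Theorem \ref{thm1} gives $(1)\Leftrightarrow(2)$, the corollary follows at once. So the real work is the bridge between hyperinvariant subspaces and idempotents, and this I would prove in two directions.

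For the easy direction, suppose $p\in\mathfrak{A}_T''$ is a non-trivial idempotent and put $M=\operatorname{ran}p$; this is a closed subspace, equal to $\ker(I-p)$, and it is neither $\{0\}$ nor $\mathfrak{H}$. Since $p$ lies in the double commutant it commutes with every $B\in\mathfrak{A}_T'$, so for $x\in M$ one has $px=x$ and hence $Bx=Bpx=pBx\in M$; thus $M$ is invariant under every operator in $\mathfrak{A}_T'$, i.e.\ $M$ is a non-trivial hyperinvariant subspace for $T$. This direction requires no amenability. For the converse, suppose $M$ is a non-trivial hyperinvariant subspace, so $M\in\operatorname{Lat}\mathfrak{A}_T'$. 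Here I would invoke Lemma \ref{lem 1}: because $\mathfrak{A}_T$ is commutative and amenable, $\mathfrak{A}_T'$ is completely reducible, whence there exists $N\in\operatorname{Lat}\mathfrak{A}_T'$ with $\mathfrak{H}=M\dot{+}N$. Let $p$ be the bounded idempotent with range $M$ and kernel $N$. Since both $M$ and $N$ are invariant under each $B\in\mathfrak{A}_T'$, writing $x=m+n$ with $m\in M$, $n\in N$ gives $pBx=Bm=Bpx$, so $p$ commutes with $\mathfrak{A}_T'$ and therefore $p\in(\mathfrak{A}_T')'=\mathfrak{A}_T''$; as $M$ is non-trivial, $p$ is a non-trivial idempotent.

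The substance of the argument lies entirely in the converse, and the single point that genuinely uses amenability is the passage from the merely invariant subspace $M$ to a complementary invariant subspace $N$: complete reducibility of $\mathfrak{A}_T'$, supplied by Lemma \ref{lem 1}, is exactly what upgrades a hyperinvariant subspace to an idempotent in the double commutant. Without it a hyperinvariant subspace need not split off an invariant complement, and no idempotent in $\mathfrak{A}_T''$ would be produced; the boundedness bound on idempotents in Lemma \ref{lem 1} plays no role here. I expect this reliance on complete reducibility to be the one place where care is needed, the remaining identifications being formal. With the per-operator equivalence in hand, combining it with Theorem \ref{thm1} closes the corollary.
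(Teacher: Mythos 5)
Your proof is correct and is essentially the argument the paper intends: the corollary is stated as an immediate consequence of Theorem~\ref{thm1}, and the bridge you construct---non-trivial idempotents in $\mathfrak{A}_T''$ correspond exactly to non-trivial hyperinvariant subspaces of $T$, with complete reducibility of $\mathfrak{A}_T'$ from Lemma~\ref{lem 1} supplying the invariant complement (and hence the bounded commuting idempotent) in the nontrivial direction---is precisely the implicit content of that deduction. No gaps.
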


\begin{rem}\label{rem1}
In \cite{F2005}, Farenick, Forrest and Marcoux showed that if $T\in\mathfrak{B}(\mathfrak{H})$ is amenable and
similar to a
 normal operator $N$, then the spectrum of $N$ has connected complement and empty
interior. According to \cite[Theorem 1.23]{R2003}, $N$ is a reducible operator. Hence, there exists an
invertible operator $X\in\mathfrak{B}(\mathfrak{H})$ such that $\mathfrak{A}_{XTX^{-1}}''$ is a reducible
algebra. The following theorem give the equivalent description for Conjecture 1.1 from the existence of
invariant subspace for amenable operators.
\end{rem}

\begin{thm}\label{thm11}
The following are equivalent:

\textup{(1)} Every amenable operator is similar to a normal operator;

\textup{(2)} For every amenable operator $T\in\mathfrak{B}(\mathfrak{H})$, there exists an invertible operator
$X\in\mathfrak{B}(\mathfrak{H})$ such that $\mathfrak{A}_{XTX^{-1}}''$ is a reducible algebra and $T$ has a
non-trivial invariant subspace.
\end{thm}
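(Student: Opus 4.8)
The plan is to prove the two implications separately, using Theorem \ref{thm1} to reduce the substantive direction to the hyperinvariant-subspace criterion already available there.

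For $(1)\Rightarrow(2)$ I would argue exactly along the lines of Remark \ref{rem1}. Given an amenable $T$, hypothesis (1) makes it similar to a normal operator $N$, and Remark \ref{rem1} then produces an invertible $X$ with $\mathfrak{A}_{XTX^{-1}}''$ reducible, which is the first half of (2). For the second half I would note that $T$ has a non-trivial invariant subspace: a similarity carries $\operatorname{Lat}$ bijectively, so it suffices that $N$ has one, and if $N$ is non-scalar then $\sigma(N)$ contains at least two points and a Borel splitting of $\sigma(N)$ yields a non-trivial spectral (reducing) projection, while if $N$ is scalar then every closed subspace is invariant and $\dim\mathfrak{H}=\infty$ supplies a proper non-zero one. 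This settles $(1)\Rightarrow(2)$.

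For $(2)\Rightarrow(1)$, by Theorem \ref{thm1} it suffices to verify its clause (2), namely that every non-scalar amenable operator has a non-trivial hyperinvariant subspace. So let $T$ be non-scalar and amenable and suppose, toward a contradiction, that $\mathfrak{A}_T'$ is transitive (no non-trivial hyperinvariant subspace). By clause (2a) I may replace $T$ by a similar operator, which preserves non-scalarity and carries $\operatorname{Lat}\mathfrak{A}_T'$ bijectively (hence preserves transitivity of $\mathfrak{A}_T'$), and assume $\mathfrak{A}_T''$ is reducible; by clause (2b), $T$ has a non-trivial invariant subspace $M\in\operatorname{Lat}\mathfrak{A}_T$. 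Transitivity of $\mathfrak{A}_T'$ gives two rigidity facts. An idempotent $E\in\mathfrak{A}_T''$ commutes with $\mathfrak{A}_T'$, so its range and kernel lie in $\operatorname{Lat}\mathfrak{A}_T'$ and are therefore trivial; thus $\mathfrak{A}_T''$ has no non-trivial idempotent and $\mathcal{P}(\mathfrak{A}_T'')=\mathbb{C}I$. Similarly, for self-adjoint $H\in\mathfrak{A}_T''$ the spectral projections of $H$ commute with $\mathfrak{A}_T'$, so their ranges are trivial and $H$ is scalar; hence the self-adjoint part of $\mathfrak{A}_T''$ is $\mathbb{R}I$. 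On the other side, reducibility of $\mathfrak{A}_T''$ means each $N\in\operatorname{Lat}\mathfrak{A}_T''$ reduces $\mathfrak{A}_T''$, so its orthogonal projection lies in $(\mathfrak{A}_T'')'=\mathfrak{A}_T'$. The plan is then to feed $M$ into this picture: using the amenability of $\mathfrak{A}_T$ (the reduction property and the uniform idempotent bound of Lemma \ref{lem 1}, which under the standing similarity of Notation \ref{nota} let me treat the relevant lattices as orthocomplemented) I would manufacture a non-trivial $N\in\operatorname{Lat}\mathfrak{A}_T''$ from $M$, for instance as $\overline{\mathfrak{A}_T''M}$ or as the largest $\mathfrak{A}_T''$-invariant subspace inside $M$, and then average its projection $P_N\in\mathfrak{A}_T'$, via the virtual diagonal of $\mathfrak{A}_T$, into the center $\mathfrak{A}_T'\cap\mathfrak{A}_T''$. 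A non-trivial central projection is precisely a non-trivial reducing hyperinvariant subspace, contradicting transitivity and forcing $T$ to be scalar.

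I expect the averaging/centering step to be the main obstacle, and it is exactly where amenability must do the real work. The tension is sharp: reducibility from (2a) and the invariant subspace from (2b) only ever produce \emph{idempotents in $\mathfrak{A}_T'$}, whereas transitivity is contradicted only by a \emph{central} idempotent, and a transitive algebra may freely contain non-central idempotents, so no purely lattice-theoretic argument can suffice; moreover the two candidate subspaces $\overline{\mathfrak{A}_T''M}$ and the largest invariant subspace inside $M$ may degenerate (to $\mathfrak{H}$ and to $\{0\}$ respectively), so even producing a usable $N\in\operatorname{Lat}\mathfrak{A}_T''$ is delicate. Resolving this requires showing that, under amenability, the reductive algebra $\mathfrak{A}_T''$ is rigid enough — trivial self-adjoint part, no non-trivial idempotents — that the projections forced into $\mathfrak{A}_T'$ by reducibility cannot coexist with a proper invariant subspace of $T$ unless they are central. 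As a sanity check, in finite dimensions transitivity of $\mathfrak{A}_T'$ already yields $\mathfrak{A}_T'=\mathfrak{B}(\mathfrak{H})$ by Burnside's theorem, hence $\mathfrak{A}_T''=\mathbb{C}I$ and $T$ scalar, with no appeal to (2); the entire difficulty is the genuinely infinite-dimensional phenomenon, where Burnside fails and the virtual diagonal is the only available substitute. Once $T$ is shown to be scalar under the transitivity assumption, the hypothesis of Theorem \ref{thm1}(2) holds and that theorem upgrades the conclusion to statement (1).
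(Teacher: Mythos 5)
Your $(1)\Rightarrow(2)$ is fine and coincides with the paper's (it is exactly Remark \ref{rem1} plus the easy observation about invariant subspaces of operators similar to normal ones). The problem is $(2)\Rightarrow(1)$: what you have written is a strategy, not a proof, and it stalls at precisely the step that carries the entire content of the theorem. Reducing to clause (2) of Theorem \ref{thm1} is legitimate, and your rigidity observations under the transitivity assumption are correct ($\mathfrak{A}_T''$ has no non-trivial idempotent and scalar self-adjoint part). But, as you yourself note, hypotheses (2a)--(2b) only ever produce idempotents in $\mathfrak{A}_T'$ -- the oblique idempotent coming from complete reducibility, and the orthogonal projection $P_N\in(\mathfrak{A}_T'')'=\mathfrak{A}_T'$ forced in by reducibility of $\mathfrak{A}_T''$ -- whereas a contradiction with transitivity of $\mathfrak{A}_T'$ requires an idempotent in $\mathfrak{A}_T''$, i.e.\ one commuting with \emph{all} of $\mathfrak{A}_T'$. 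The proposed bridge, ``average $P_N$ via the virtual diagonal of $\mathfrak{A}_T$ into $\mathfrak{A}_T'\cap\mathfrak{A}_T''$,'' is never constructed: the virtual diagonal of the commutative non-self-adjoint algebra $\mathfrak{A}_T$ acts on $\mathfrak{A}_T$-bimodules, and no bimodule map is exhibited that moves a projection of $\mathfrak{A}_T'$ into $\mathfrak{A}_T''$; you flag this yourself as ``the main obstacle.'' Deferring the only hard step is a genuine gap, not a routine omission. (A secondary point: your worry about producing $N\in\textup{Lat}\,\mathfrak{A}_T''$ is actually repairable without your degenerating candidates -- since $T$ is amenable, $\mathfrak{A}_T$ is completely reducible by Gifford's theorem, so $M\in\textup{Lat}\,\mathfrak{A}_T$ has an invariant complement, the associated idempotent lies in $\mathfrak{A}_T'$, and therefore $M$ itself lies in $\textup{Lat}\,\mathfrak{A}_T''$. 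The irreparable part is the centering step.)

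The paper's own proof of $(2)\Rightarrow(1)$ goes a different way and never needs such an averaging device: it adapts, word for word, the direct-integral argument of Theorem \ref{thm1}. Under the standing Notation \ref{nota}, one takes the maximal decomposition with respect to the diagonal algebra $\mathcal{P}(\mathfrak{A}_T'')$, now feeding in the reducibility of $\mathfrak{A}_{XTX^{-1}}''$ supplied by hypothesis (2); Lemma \ref{Fong} makes almost every fibre algebra transitive, the rational-polynomial homomorphism argument together with Lemma \ref{lem 3} makes almost every fibre operator $T(\lambda)$ amenable, and the invariant-subspace half of hypothesis (2) is then invoked \emph{fibrewise} to force $T(\lambda)$ to be scalar almost everywhere, whence $T$ is (similar to) a normal operator. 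In other words, the hypothesis is consumed at the level of the direct integrands $T(\lambda)$, where transitivity is already in hand from reduction theory, rather than globally on $T$, where one would indeed need the centering lemma you could not prove. Your proposal never engages this fibrewise mechanism, and without it the implication remains unestablished.
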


\begin{proof}
$(1)\Rightarrow(2)$ is clear by Remark \ref{rem1}.

 $(2)\Rightarrow(1)$ is trivial modifications adapt the proof of theorem 2.6.
\end{proof}

\begin{rem}
According to theorem 2.6, \ref{thm11}, we obtain that the Conjecture \ref{conj 1} for  operator algebra which is
generated by an operator is equivalent to the following statements:

(1) Every amenable operator $T$ has a non-trivial invariant subspace and renorm $\mathfrak{H}$ with an
equivalent Hilbert space norm so that under this norm Lat$\mathfrak{A}_T$ becomes orthogonally complemented;

(2) Every non-scalar amenable operator has a non-trivial hyperinvariant subspace.
\end{rem}
\vskip1cm
\section{Decomposition of  Amenable operators}

In this section, we get two decompositions for amenable operators and prove that the two decompositions are the
same which supporting Conjecture 1.1.

At first, we summarize some of the details of  multiplicity theory for abelian von Neumann algebras. For the
most part, we will follow \cite{D1996}. If $A$ is an operator on a Hilbert space $\mathfrak{K}$ and $n$ is a
cardinal number, Let $\mathfrak{K}^n$ denote the orthogonal direct sum of $n$ copies of $\mathfrak{K}$, and
$A^{(n)}$ be the operator on $\mathfrak{K}^n$ which is the direct sum of $n$ copies of $A$. Whenever
$\mathfrak{A}$ is an operator algebra on $\mathfrak{K}$, $\mathfrak{A}^{(n)}$ denotes the algebra $\{A^{(n)},
A\in\mathfrak{A}\}$. An abelian von Neumann algebra $\mathfrak{B}$ is of {\it uniform multiplicity $n$} if it is
(unitary equivalent to) $\mathfrak{A}^{(n)}$ for some maximal abelian von Neumann algebra $\mathfrak{A}$. By
\cite{D1996}, for any abelian von Neumann algebra $\mathfrak{A}$, there exists a sequens of regular Borel
measures $\{\mu_n\}$ on a sequens of separable metric space $\{X_n\}$ such that $\mathfrak{A}$ is unitary
equivalent to $\sum_{n=1}^\infty\oplus \mathfrak{B}_n\oplus\mathfrak{B}_\infty$, where $\mathfrak{B}_n$ is a von
Neumman algebra which has uniform multiplicity $n$ for all $1\leq n\leq \infty$. For further details see
\cite[II.3]{D1996}.

\begin{prop}\label{prop1}
Suppose that $T$ is amenable operator and $\mathfrak{A}_T'$ contains a subalgebra which is similar to an abelian
von Neumman algebra with no direct summand of uniform multiplicity infinite, then $T$ is similar to a normal
operator.
\end{prop}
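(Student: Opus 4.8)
The plan is to run the reduction-theory machinery of Theorem \ref{thm1}, but now taking the given algebra itself as the diagonal algebra, and to exploit the multiplicity hypothesis to force the fibers to be finite-dimensional. Let $\mathfrak{B}\subseteq\mathfrak{A}_T'$ be the subalgebra similar to an abelian von Neumann algebra; after a preliminary similarity (which changes neither amenability nor the uniform-multiplicity decomposition) I may assume $\mathfrak{B}$ is itself an abelian von Neumann algebra, and I use it as the diagonal algebra of a direct integral $\mathfrak{H}=\int_\wedge^\oplus\mathfrak{H}(\lambda)\mu(d\lambda)$. Since $\mathfrak{B}\subseteq\mathfrak{A}_T'$ we have $T\in\mathfrak{B}'$, so $T$ is decomposable, $T\sim\int_\wedge^\oplus T(\lambda)\mu(d\lambda)$. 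The decisive point is that the hypothesis ``no direct summand of uniform multiplicity infinite'' translates, through the multiplicity theory summarized before the statement, into $\dim\mathfrak{H}(\lambda)<\infty$ for almost every $\lambda$; that is, each $T(\lambda)$ is a finite matrix.

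Next I would transport amenability to the fibers exactly as in the proof of Theorem \ref{thm1}. Writing $\{p_n\}$ for the rational polynomials, one gets $p_n(T)(\lambda)=p_n(T(\lambda))$ with $\|p_n(T(\lambda))\|\le\|p_n(T)\|$ for a.e.\ $\lambda$, so the maps $\varphi_\lambda\colon\mathfrak{A}_T\to\mathfrak{A}_{T(\lambda)}$, $\varphi_\lambda(p_n(T))=p_n(T(\lambda))$, are well-defined contractive homomorphisms with dense range. By Lemma \ref{lem 3}, $\mathfrak{A}_{T(\lambda)}$ is amenable for almost every $\lambda$; moreover a contractive dense-range homomorphism does not increase the amenability constant, so the fiber algebras are amenable with constant bounded by that of $\mathfrak{A}_T$. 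Consequently Lemma \ref{lem 1} supplies a \emph{single} constant $M$ bounding every idempotent in $\mathfrak{A}_{T(\lambda)}''$ for a.e.\ $\lambda$.

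Now the finite-dimensionality pays off. A finite-dimensional commutative amenable Banach algebra is semisimple, hence isomorphic to $\mathbb{C}^{k}$; equivalently the minimal polynomial of $T(\lambda)$ is square-free, so each $T(\lambda)$ is diagonalizable, $T(\lambda)=\sum_j\mu_j(\lambda)P_j(\lambda)$ with commuting idempotents summing to the identity. Combining diagonalizability with the uniform idempotent bound $M$, I would invoke the same tool underlying Lemma \ref{lem 2} (Dixmier's theorem, \cite{D1988}) to produce, for a.e.\ $\lambda$, an invertible $S(\lambda)$ with $\|S(\lambda)\|\,\|S(\lambda)^{-1}\|$ bounded by a constant depending only on $M$ such that $S(\lambda)^{-1}T(\lambda)S(\lambda)$ is normal. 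Finally I would choose $\lambda\mapsto S(\lambda)$ measurably, building $S(\lambda)$ from the spectral idempotents $P_j(\lambda)$ (which are measurable in $\lambda$ through the resolvent of $T(\lambda)$), so that $S=\int_\wedge^\oplus S(\lambda)\mu(d\lambda)$ is a bounded, boundedly invertible decomposable operator. Then $S^{-1}TS=\int_\wedge^\oplus S(\lambda)^{-1}T(\lambda)S(\lambda)\mu(d\lambda)$ is decomposable with a.e.\ normal fibers, hence normal, and $T$ is similar to a normal operator.

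The main obstacle I expect is the last stage: getting the fiber similarities $S(\lambda)$ with a bound that is \emph{independent of $\dim\mathfrak{H}(\lambda)$} and simultaneously measurable in $\lambda$. The dimension-independence is exactly where Dixmier's uniform estimate is essential: the naive symmetrizer $G(\lambda)=\sum_j P_j(\lambda)^*P_j(\lambda)$ does make the idempotents orthogonal, but $\|G(\lambda)^{\pm1/2}\|$ grows with the fiber dimension, which may be unbounded across the pieces $E_n$, and such a bound would not assemble into a bounded global $S$. Reconciling the uniform bound coming from the idempotent estimate with a genuinely measurable selection of $S(\lambda)$ is the technical heart of the argument; everything before it (finite fibers, fiber amenability, fiber diagonalizability) is a direct adaptation of the techniques already developed for Theorem \ref{thm1}.
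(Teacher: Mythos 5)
Your strategy---direct integral over $\mathfrak{B}$ as diagonal algebra, finite-dimensional fibers, fiberwise amenability via Lemma \ref{lem 3}, fiberwise diagonalizability from semisimplicity of finite-dimensional commutative amenable algebras---is coherent and genuinely different from the paper's, but as written it has two unfilled gaps, both located exactly at the step you yourself defer as ``the technical heart.'' First, the uniformity of the idempotent bound: Lemma \ref{lem 1} (Gifford) produces a constant $M$ \emph{for a given algebra}; to get one $M$ serving almost every fiber algebra $\mathfrak{A}_{T(\lambda)}''$ you need the bound to be a function of the amenability constant alone. You assert this (``a contractive dense-range homomorphism does not increase the amenability constant, so \dots Lemma \ref{lem 1} supplies a single constant $M$''), but that quantitative dependence is not the statement of the cited lemma; it requires reopening Gifford's proof, and nothing in your argument substitutes for that. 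Note you cannot sidestep this by lifting the fiber idempotents to global ones in $\mathfrak{A}_T''$: the spectral idempotents $P_j(\lambda)$ do not glue across $\lambda$, since the eigenvalue configuration varies. Second, even granting a uniform $M$, you never actually construct the similarities $S(\lambda)$ with dimension-free condition number and measurable dependence on $\lambda$; you only explain why the naive symmetrizer fails. A proof cannot conclude by naming its own missing core. (This part is fillable: average over the finite group of involutions $I-2\sum_{j\in F}P_j(\lambda)$, each of norm at most $2M+1$ because $\sum_{j\in F}P_j(\lambda)$ is again an idempotent of the fiber algebra; the resulting positive operator is a canonical finite sum, hence measurable in $\lambda$, and Dixmier--Day averaging gives a condition number depending only on $M$. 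One must also handle measurable enumeration of eigenvalues and spectral projections across sets where the number of eigenvalues jumps. None of this is in your text.)

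For contrast, the paper avoids fiber analysis entirely, which is why it needs none of the above. It decomposes $\mathfrak{B}$ into its uniform multiplicity summands $\sum_n\oplus\mathfrak{B}_n$ (all $n$ finite, by hypothesis), so that $T=\sum_n\oplus T_n$ with $T_n\in\mathfrak{B}_n'$; by \cite[Theorem 7.20]{R2003} each $T_n$ is unitarily equivalent to an $n\times n$ upper triangular operator matrix with normal entries; by \cite[Proposition 3.1]{Y} amenability forces $T_n$ to be similar to the diagonal $\oplus_{i=1}^n N_{ii}$, a normal operator; and \cite[Corollary 26]{F1977} assembles the countably many similarities into a single one for $T$. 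In other words, the uniform-bound problem that stalls your argument is outsourced in the paper to Fong's gluing result for direct sums, and the diagonalization problem is outsourced to the triangularization theorem plus the amenable-splitting result of \cite{Y}. Your route could be completed along the lines sketched above, but as submitted it is a proof outline with its decisive steps acknowledged rather than proved.
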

\begin{proof}
For the sake of simplicity, we assume $\mathfrak{A}_T'$ contains a subalgebra $\mathfrak{B}$ which is an abelian
von Neumman subalgebra with no direct summand of  uniform multiplicity infinite. Trivial modifications adapt the
proof to the more general case.

By \cite[II.3]{D1996}, there exists a sequens of regular Borel measures $\{\mu_n\}$ on a sequens of separable
metric space $\{X_n\}$ such that $\mathfrak{B}$ is unitarily equivalent to $\sum_{n=1}^\infty\oplus
\mathfrak{B}_n$, where $\mathfrak{B}_n$ is a von Neumman algebra which has
 uniform multiplicity $n$ for all $n$. Hence, $T=\sum_{n=1}^\infty\oplus T_n$,
where $T_n\in \mathfrak{B}_n'$. It suffices to show that $T_n$ is similar to a normal operator for all $n$, then
by \cite[Corollary 26]{F1977}, it follows that $T$ is similar to a normal operator.

Since $T\in\mathfrak{B}_n'$, according to \cite[Theorem 7.20]{R2003}, for any $1\leq n<\infty$ there exists a
unitary operator $U_n\in\mathfrak{B}_n'$ such that
$$U_nT_n(U_n)^{-1}= \left[\begin{array}{ccccc}
N_{11}&N_{12}&\cdots&\cdots&N_{1n}\\
0&N_{22}&\cdots&\cdots&N_{2n}\\
0&0&\ddots&\ddots&\vdots\\
\vdots&\vdots&\ddots&\ddots&\vdots\\
0&0&\cdots&0&N_{nn}\\
\end{array}\right]
 $$
 where $N_{ij}$ is a normal operator for all $1\leq i,j\leq n$. By \cite[ Proposition 3.1]{Y}, it follows that $T_n$ is similar to $\oplus_{i=1}^{n}
 N_{ii}$, i.e. $T_n$ is similar to a normal operator for all $n$.
\end{proof}

\begin{cor}\label{cor1}
Assume $T$ is amenable operator, then there exists hyperinvariant subspaces $M, N$ of $T$ such that $T$ has the
form $T=T_1\dot{+}T_2$ respect to the space decomposition $\mathfrak{H}=M\dot{+}N$, where $T_1, T_2$ are
amenable operators, $T_1$ is similar to a normal operator and $\mathcal{P}(\mathfrak{A}_{T_2}'')$ is similar to
an abelian von Neumman algebra with uniform multiplicity infinite.
\end{cor}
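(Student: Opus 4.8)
The plan is to manufacture the splitting directly from the idempotents supplied by Lemma \ref{lem 2}, reading off the multiplicities of the two pieces from the von Neumann multiplicity theory recalled at the start of this section. First note that since $T$ is amenable, $\mathfrak{A}_T$ is commutative, so $\mathfrak{A}_T\subseteq\mathfrak{A}_T'$ and hence $\mathfrak{A}_T''\subseteq\mathfrak{A}_T'$; in particular $\mathfrak{A}_T''$ is commutative and contains $T$. By Lemma \ref{lem 2} I pick an invertible $X$ with $\mathfrak{B}:=X\mathcal{P}(\mathfrak{A}_T'')X^{-1}$ an abelian von Neumann algebra, and let $p\in\mathfrak{B}$ be the central projection separating the finite uniform-multiplicity part $\mathfrak{B}p=\sum_{n<\infty}\oplus\mathfrak{B}_n$ from the part $\mathfrak{B}(I-p)=:\mathfrak{B}_\infty$ of uniform multiplicity infinite. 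Put $e=X^{-1}pX$, an idempotent in $\mathfrak{A}_T''$, and set $M=e\mathfrak{H}$, $N=(I-e)\mathfrak{H}$, so that $\mathfrak{H}=M\dot{+}N$. Since $e\in\mathfrak{A}_T''$ commutes with every operator of $\mathfrak{A}_T'$, both $M,N\in\mathrm{Lat}\,\mathfrak{A}_T'$, i.e.\ they are hyperinvariant for $T$; as $T\in\mathfrak{A}_T'$ this gives $T=T_1\dot{+}T_2$ with $T_1=T|_M$, $T_2=T|_N$. The restriction maps $p_n(T)\mapsto p_n(T_i)$ are contractive homomorphisms with dense range, so Lemma \ref{lem 3} makes $T_1,T_2$ amenable.

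For $T_1$ I would apply Proposition \ref{prop1}. Each $b\in\mathcal{P}(\mathfrak{A}_T'')\subseteq\mathfrak{A}_T''$ commutes with $T$, hence $b$ leaves $M$ invariant and $b|_M$ commutes with $T_1$; thus $\mathcal{P}(\mathfrak{A}_T'')|_M\subseteq\mathfrak{A}_{T_1}'$. Conjugating by $X|_M\colon M\to p\mathfrak{H}$ identifies this subalgebra with $\mathfrak{B}p$, an abelian von Neumann algebra with no summand of uniform multiplicity infinite. Since $T_1$ is amenable, Proposition \ref{prop1} shows $T_1$ is similar to a normal operator.

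The heart of the matter is to prove that $\mathcal{P}(\mathfrak{A}_{T_2}'')$ has uniform multiplicity infinite, which I would do by showing it is similar to $\mathfrak{B}_\infty$. It is cleanest to pass to $\tilde T=XTX^{-1}$: now $p$ is a genuine reducing projection commuting with $\tilde T$, so $\tilde T=\tilde T_1\oplus\tilde T_2$ on $p\mathfrak{H}\oplus(I-p)\mathfrak{H}$, the similarity $X|_N$ carries $T_2$ to $\tilde T_2$, and $\mathcal{P}(\mathfrak{A}_{\tilde T}'')=\mathfrak{B}$. One inclusion, $\mathfrak{B}_\infty\subseteq\mathcal{P}(\mathfrak{A}_{\tilde T_2}'')$, follows as before: for $b\in\mathfrak{B}$ and any $C\in\mathfrak{A}_{\tilde T_2}'$ the operator $0\oplus C$ lies in $\mathfrak{A}_{\tilde T}'$, so $b$ commutes with it and $b|_{(I-p)\mathfrak{H}}\in\mathfrak{A}_{\tilde T_2}''$; the projections generating $\mathfrak{B}_\infty$ are then idempotents of $\mathfrak{A}_{\tilde T_2}''$.

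The reverse inclusion is the step I expect to be the main obstacle, and it is exactly where the reducing nature of $p$ is needed. Given an idempotent $q\in\mathfrak{A}_{\tilde T_2}''$, I would lift it to $\hat q=0\oplus q$ and verify $\hat q\in\mathfrak{A}_{\tilde T}''$: any $A\in\mathfrak{A}_{\tilde T}'$ commutes with $p\in\mathfrak{A}_{\tilde T}''$, hence splits as $A_1\oplus A_2$ with $A_2\in\mathfrak{A}_{\tilde T_2}'$, so $q$ commutes with $A_2$ and $\hat q$ with $A$. Then $\hat q$ is an idempotent in $\mathfrak{A}_{\tilde T}''$, whence $\hat q\in\mathcal{P}(\mathfrak{A}_{\tilde T}'')=\mathfrak{B}$; since $\hat q=\hat q(I-p)$ this places $\hat q\in\mathfrak{B}_\infty$, so $q=\hat q|_{(I-p)\mathfrak{H}}\in\mathfrak{B}_\infty$. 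Hence $\mathcal{P}(\mathfrak{A}_{\tilde T_2}'')\subseteq\mathfrak{B}_\infty$, giving equality; transporting back by $X|_N$ shows $\mathcal{P}(\mathfrak{A}_{T_2}'')$ is similar to $\mathfrak{B}_\infty$, an abelian von Neumann algebra of uniform multiplicity infinite, completing the proof.
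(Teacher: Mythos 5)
Your proof is correct, and it is the natural way to fill in what the paper leaves unproved: the paper states this corollary immediately after Proposition \ref{prop1} with no argument at all, clearly intending exactly the route you take --- apply Lemma \ref{lem 2} to make $\mathcal{P}(\mathfrak{A}_T'')$ an abelian von Neumann algebra up to similarity, split it by multiplicity theory into its finite and infinite uniform-multiplicity summands, pull the separating projection back to an idempotent $e\in\mathfrak{A}_T''$ (whose range and co-range are then hyperinvariant), get amenability of the pieces from Lemma \ref{lem 3}, and apply Proposition \ref{prop1} to the finite-multiplicity corner. The place where you add genuine content is the two-inclusion identification $\mathcal{P}(\mathfrak{A}_{\tilde T_2}'')=\mathfrak{B}_\infty$: this really does need an argument, since a priori $\mathfrak{A}_{T_2}''$ could contain idempotents not arising from $\mathfrak{A}_T''$, and your lifting step ($0\oplus q$ commutes with every $A\in\mathfrak{A}_{\tilde T}'$ because each such $A$ is block-diagonal with respect to $p$) disposes of it cleanly. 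It is worth contrasting this with how the paper handles the analogous claim in its stronger Theorem \ref{thm2}: there the decomposition is produced by a Zorn's-lemma-maximal idempotent $p_0\in P(\mathfrak{A}_T'')$ with $T|_{\textup{Ran}\,p_0}$ similar to a normal operator, and the uniform-multiplicity-infinite property of $\mathcal{P}(\mathfrak{A}_{T_2}'')$ is obtained by contradiction: a finite-multiplicity summand would, via Proposition \ref{prop1}, yield a restriction of $T_2$ similar to a normal operator, violating maximality. Your construction buys an explicit description of the splitting ($M$ is precisely the finite-multiplicity corner of $\mathcal{P}(\mathfrak{A}_T'')$), whereas the paper's maximality argument buys the extra conclusions of Theorem \ref{thm2} --- in particular that its $M_1$ is the \emph{largest} hyperinvariant subspace on which $T$ is similar to a normal operator, which your $M$ need not be. Two cosmetic points: you should note the degenerate cases $p=0$ or $p=I$ (one of $M,N$ is then $\{0\}$, as the paper itself allows in Case 1 of Theorem \ref{thm2}), and strictly speaking the identifications of $\mathfrak{B}p$ and $\mathfrak{B}(I-p)$ with $\sum_{n<\infty}\oplus\mathfrak{B}_n$ and $\mathfrak{B}_\infty$ are unitary equivalences, which is harmless since similarity is all that is claimed.
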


The proof of  the following lemma is straightforward and we  omit
it.

\begin{lem}\label{lem4}
Suppose that $\mathfrak{A}$ is a completely reductive operator algebra and $p\in P(\mathfrak{A}')$. Then
$p\mathfrak{A}$ is a completely reductive operator algebra on $\textup{Ran} p$.
\end{lem}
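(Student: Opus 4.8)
The plan is to exploit the idempotent $p\in P(\mathfrak{A}')$ to transport invariant subspaces of the compression $p\mathfrak{A}$ back and forth between $\operatorname{Ran}p$ and all of $\mathfrak{H}$, and then to invoke the complete reducibility of $\mathfrak{A}$ itself. First I would record the basic structural facts. Since $p$ commutes with every $A\in\mathfrak{A}$ and $p^2=p$, both $\operatorname{Ran}p$ and $\ker p=\operatorname{Ran}(1-p)$ lie in $\operatorname{Lat}\mathfrak{A}$, and for $x\in\operatorname{Ran}p$ one has $pAx=Apx=Ax$, so the operators of $p\mathfrak{A}$ acting on $\operatorname{Ran}p$ are exactly the restrictions $A|_{\operatorname{Ran}p}$. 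In particular, if $M\in\operatorname{Lat}(p\mathfrak{A})$ then $M\subseteq\operatorname{Ran}p$ and $AM=A|_{\operatorname{Ran}p}M\subseteq M$, so $M\in\operatorname{Lat}\mathfrak{A}$ as a subspace of $\mathfrak{H}$.

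Given such an $M$, the key device is to enlarge it by $\ker p$. I would check that $M\dot{+}\ker p$ is a closed, $\mathfrak{A}$-invariant subspace; closedness follows because $p$ is a bounded idempotent, so $M\subseteq\operatorname{Ran}p$ and $\ker p$ sit inside the two complementary closed ranges of $p$ and $1-p$. Complete reducibility of $\mathfrak{A}$ then furnishes an $N\in\operatorname{Lat}\mathfrak{A}$ with $\mathfrak{H}=(M\dot{+}\ker p)\dot{+}N$, so that $\mathfrak{H}=M\oplus\ker p\oplus N$ is an algebraic direct sum of three invariant pieces. The candidate complement of $M$ inside $\operatorname{Ran}p$ is $pN$. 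It is invariant because $A(pN)=p(AN)\subseteq pN$; applying $p$ to the triple decomposition and using $pM=M$ and $p(\ker p)=0$ gives $\operatorname{Ran}p=M+pN$; and directness follows since any $m=pn\in M\cap pN$ forces $n=pn+(1-p)n\in M\dot{+}\ker p$, hence $n\in N\cap(M\dot{+}\ker p)=\{0\}$.

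The one point that needs genuine care, and which I expect to be the main obstacle, is that $pN$ be closed, so that it is a bona fide element of $\operatorname{Lat}(p\mathfrak{A})$ rather than merely an invariant linear manifold. To handle this I would use that the algebraic decomposition $\mathfrak{H}=M\oplus\ker p\oplus N$ into closed subspaces is automatically topological, so the coordinate projection $E_N$ onto $N$ is bounded (closed graph / open mapping theorem). Writing an arbitrary $x\in\operatorname{Ran}p$ as $x=px=E_Mx+pE_Nx$ exhibits $P:=pE_N|_{\operatorname{Ran}p}$ as precisely the projection of $\operatorname{Ran}p$ onto $pN$ along $M$; it is a bounded idempotent, so $pN=\operatorname{Ran}P$ is closed. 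With $M$ and $pN$ now both closed invariant subspaces of $\operatorname{Ran}p$ satisfying $\operatorname{Ran}p=M\dot{+}pN$, the defining condition of complete reducibility is verified for $p\mathfrak{A}$, completing the argument. (Here I read \emph{completely reductive} as the \emph{completely reducible} property of the introduction; the whole argument is purely algebraic once the boundedness of $p$ is invoked, which is presumably why the authors regard it as straightforward.)
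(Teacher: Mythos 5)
Your proof is correct. The paper gives no proof at all here --- it states that ``the proof of the following lemma is straightforward and we omit it'' --- and your argument supplies exactly what one would expect that omitted proof to be: pull an $\mathfrak{A}$-invariant subspace $M\subseteq\operatorname{Ran}p$ up to the invariant subspace $M\dot{+}\ker p$, take an invariant algebraic complement $N$ by complete reducibility of $\mathfrak{A}$, and compress back to get $\operatorname{Ran}p=M\dot{+}pN$; your treatment of the one genuinely non-obvious point, the closedness of $pN$ via boundedness of the coordinate projection $E_N$ (closed graph/open mapping) so that $pE_N|_{\operatorname{Ran}p}$ is a bounded idempotent with range $pN$, is exactly the care the lemma needs, and your reading of ``completely reductive'' as the paper's ``completely reducible'' is the one consistent with how the lemma is used in Proposition 3.5.
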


We are in need of the following propositions before we can address the main theorem of this section.

\begin{prop}\label{prop6}
Assume that $T$ is a amenable operator and  there exists a space decomposition $\mathfrak{H}=M\dot{+}N$ such
that $T$ has the matrix form $T= \left[\begin{array}{cc}
T_{1}&\\
&T_{2}
\end{array}\right
]
\begin{matrix}
\mbox{$M$}\\
\mbox{$N$}\\
\end{matrix}
$. Then $T$ is similar to a normal operator if and only if $T_1$ and $T_2$ are similar to normal operators.
\end{prop}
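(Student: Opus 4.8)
The plan is to prove the two implications separately. I expect neither direction to require amenability in an essential way: the hypothesis guarantees the ambient algebras are well-behaved, but the heart of the argument is a general fact about normal operators and similarity.

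For the implication that $T_1$ and $T_2$ being similar to normal operators forces $T$ to be similar to a normal operator, I would first observe that since $M$ and $N$ are closed and $\mathfrak{H}=M\dot{+}N$, the projection onto $M$ along $N$ is bounded (closed graph theorem). Hence the map $V:M\oplus N\to\mathfrak{H}$, $(m,n)\mapsto m+n$, out of the external orthogonal direct sum, is bounded and invertible, and $V^{-1}TV=T_1\oplus T_2$. Writing $T_i=X_iN_iX_i^{-1}$ with $N_i$ normal and $X_i$ invertible, the operator $(X_1\oplus X_2)(N_1\oplus N_2)(X_1\oplus X_2)^{-1}$ equals $T_1\oplus T_2$, and $N_1\oplus N_2$ is normal; composing with $V$ exhibits $T$ as similar to a normal operator. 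This direction is routine.

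The substantive direction is the converse. Suppose $T=XNX^{-1}$ with $N$ normal and $X$ invertible. Pulling the decomposition back through $X$, the subspaces $M'=X^{-1}M$ and $N'=X^{-1}N$ are invariant for $N$ and satisfy $\mathfrak{H}=M'\dot{+}N'$, so the idempotent $E$ projecting onto $M'$ along $N'$ commutes with $N$. The key step, and the place where I expect the only real difficulty, is to upgrade ``$M'$ is a complemented invariant subspace'' to ``$M'$ reduces $N$''. For this I would invoke Fuglede's theorem: since $E$ commutes with $N$ it also commutes with $N^*$, so the commutant $\{N\}'$ is a von Neumann algebra containing $E$. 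The range projection of $E$ taken in $\{N\}'$ is then a self-adjoint projection onto $\overline{\operatorname{Ran}E}=M'$ lying in $\{N\}'$, whence $M'$ reduces $N$.

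Once $M'$ reduces $N$, the restriction $N|_{M'}$ is normal. Finally I would transport this back: the restriction $X|_{M'}:M'\to M$ is a bounded invertible operator intertwining $N|_{M'}$ and $T_1=T|_M$, so $T_1=(X|_{M'})(N|_{M'})(X|_{M'})^{-1}$ is similar to the normal operator $N|_{M'}$; the identical argument applied to $N'$ shows $T_2$ is similar to a normal operator. The main obstacle is genuinely the reducibility step, and its resolution rests entirely on Fuglede's theorem together with the fact that range projections of elements of a von Neumann algebra lie in that algebra.
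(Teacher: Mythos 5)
Your proof is correct, and on the substantive direction it takes a genuinely different route from the paper's. The paper's proof passes from the oblique decomposition $\mathfrak{H}=M\dot{+}N$ to the orthogonal one $\mathfrak{H}=M\oplus M^{\bot}$, writes $T$ upper-triangularly there, invokes \cite[Lemma 2.8]{Y} (a splitting lemma for amenable operators) to get a unipotent similarity $S$ with $S^{-1}TS=T_1\oplus\widetilde{T_2}$, notes that $T_2$ is similar to $\widetilde{T_2}$ via the angle operator relating $N$ to $M^{\bot}$, and then cites Hadwin \cite[Proposition 6.5]{H1978} for the equivalence that an orthogonal direct sum is similar to a normal operator if and only if each summand is. You reach the same external direct sum $T_1\oplus T_2$ by the elementary bounded-idempotent argument (no amenability needed), and---this is the real difference---instead of citing Hadwin you prove the hard implication outright: pulling $M$ and $N$ back through the similarity, observing that the associated idempotent commutes with the normal operator, and then combining Fuglede's theorem with the fact that the range projection of an element of a von Neumann algebra lies in that algebra, so that the pulled-back subspaces reduce the normal operator and its restrictions to them are normal operators intertwined with $T_1$ and $T_2$. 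What the paper's approach buys is brevity, at the cost of two citations, one of which nominally drags in amenability even though the hypothesized invariant complement makes it unnecessary; what yours buys is a self-contained argument that exposes the proposition as a general fact about any operator acting block-diagonally with respect to a pair of complementary invariant subspaces, with amenability playing no role---exactly as you predicted at the outset.
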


\begin{proof}
Assume that  $T$ has the matrix form $$T= \left[\begin{array}{cc}
T_{1}&T_{12}\\
&\widetilde{T_{2}}
\end{array}\right
]
\begin{matrix}
\mbox{$M$}\\
\mbox{$M^{\bot}$}\\
\end{matrix}$$
respect to the space decomposition $\mathfrak{H}=M\oplus M^{\bot}$. By \cite[Lemma 2.8]{Y}, there exists an
invertible operator $S=\left[\begin{array}{cc}
I&S_{12}\\
&I
\end{array}\right
]
\begin{matrix}
\mbox{$M$}\\
\mbox{$M^{\bot}$}\\
\end{matrix}$ such that $S^{-1}TS=\left[\begin{array}{cc}
T_{1}&\\
&\widetilde{T_{2}}
\end{array}\right
]
\begin{matrix}
\mbox{$M$}\\
\mbox{$M^{\bot}$}\\
\end{matrix}$. Assume that  $S$ has the matrix form
$S=\begin{array}{cc}
\begin{array}{cc}M&M^{\bot}
\end{array}\\
\left[\begin{array}{cc}
I&\\
&S_1
\end{array}\right
]\end{array}
\begin{matrix}
\\
\mbox{$M$}\\
\mbox{$N$}\\
\end{matrix}
$, we obtain that $T_2=S_1\widetilde{T_{2}}S_1^{-1}$. By \cite[propsition 6.5]{H1978}, we get that $T$ is
similar to a normal operator if and only if $T_1$ and $T_2$ are similar to normal operators.
\end{proof}

\begin{prop}\label{prop5}
Suppose that $T$ is an amenable operator, $M_1\in$
Lat$\mathfrak{A}_T'$ and $M_2\in$ Lat$\mathfrak{A}_T''$. Then
$M_1+M_2$ is closed.

Moreover, if $T|_{M_1}$ and $T|_{M_2}$ are similar to normal operators, then $T|_{M_1+M_2}$ is similar to a
normal operator.
\end{prop}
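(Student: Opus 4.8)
The plan is to realize both $M_1$ and $M_2$ as ranges of bounded idempotents living in complementary commutants, and then to build a single idempotent whose range is exactly $M_1+M_2$. First I would invoke Lemma \ref{lem 1}: since $T$ is amenable, both $\mathfrak{A}_T'$ and $\mathfrak{A}_T''$ are completely reducible. As $M_1\in$ Lat$\mathfrak{A}_T'$, there is $N_1\in$ Lat$\mathfrak{A}_T'$ with $\mathfrak{H}=M_1\dot{+}N_1$; the idempotent $p_1$ onto $M_1$ along $N_1$ commutes with every element of $\mathfrak{A}_T'$, so $p_1\in(\mathfrak{A}_T')'=\mathfrak{A}_T''$ and $\textup{Ran}\,p_1=M_1$. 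Symmetrically, from $M_2\in$ Lat$\mathfrak{A}_T''$ I obtain $p_2\in P(\mathfrak{A}_T')$ with $\textup{Ran}\,p_2=M_2$. The key structural observation is that $p_1\in\mathfrak{A}_T''=(\mathfrak{A}_T')'$ and $p_2\in\mathfrak{A}_T'$, so $p_1p_2=p_2p_1$; moreover, since $T\in\mathfrak{A}_T\subseteq\mathfrak{A}_T'$, both $p_1$ and $p_2$ commute with $T$.

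For the closedness assertion I would set $r=p_1+p_2-p_1p_2$. A direct computation using $p_1^2=p_1$, $p_2^2=p_2$ and $p_1p_2=p_2p_1$ shows $r^2=r$, and the identities $rp_1=p_1$, $rp_2=p_2$ together with $rx=p_1x+p_2(x-p_1x)$ for $x\in\textup{Ran}\,r$ give $\textup{Ran}\,r=\textup{Ran}\,p_1+\textup{Ran}\,p_2=M_1+M_2$. Since $r$ is a bounded idempotent, its range is closed, which proves the first assertion.

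For the second assertion I would exploit that the commuting idempotents $p_2$ and $r-p_2=p_1-p_1p_2$ are complementary within $M=\textup{Ran}\,r$ and both commute with $T$, so $M=M_2\dot{+}M_1'$ with $M_1'=\textup{Ran}(p_1-p_1p_2)$, and this decomposition block-diagonalizes $T|_M=T|_{M_2}\dot{+}T|_{M_1'}$. Likewise the complementary idempotents $p_1-p_1p_2$ and $p_1p_2$ sum to $p_1$, commute with $T$, and satisfy $\textup{Ran}(p_1p_2)=M_1\cap M_2$, so $M_1=M_1'\dot{+}(M_1\cap M_2)$ block-diagonalizes $T|_{M_1}$. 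Each of these subspaces is a complemented $T$-invariant subspace, so for each such $K$ the restriction map $A\mapsto A|_K$ is a continuous homomorphism of $\mathfrak{A}_T$ onto a dense subalgebra of $\mathfrak{A}_{T|_K}$, whence Lemma \ref{lem 3} shows $T|_M$, $T|_{M_1}$ and $T|_{M_1'}$ are all amenable. Now from $T|_{M_1}$ similar to a normal operator, Proposition \ref{prop6} applied to $T|_{M_1}=T|_{M_1'}\dot{+}T|_{M_1\cap M_2}$ forces $T|_{M_1'}$ to be similar to a normal operator; combining this with the hypothesis that $T|_{M_2}$ is similar to a normal operator and applying Proposition \ref{prop6} once more to $T|_M=T|_{M_2}\dot{+}T|_{M_1'}$ yields that $T|_{M_1+M_2}$ is similar to a normal operator. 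The main obstacle I anticipate is the bookkeeping that keeps each idempotent in the correct commutant (so that $p_1,p_2$ commute and commute with $T$) and the verification that the restrictions remain amenable, which is exactly what lets Proposition \ref{prop6} be applied to the pieces rather than only to the whole space.
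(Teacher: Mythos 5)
Your proof is correct, and while it rests on the same two pillars as the paper's --- complete reducibility of $\mathfrak{A}_T'$ and $\mathfrak{A}_T''$ from Lemma \ref{lem 1}, and two applications of Proposition \ref{prop6} --- its mechanics are genuinely different. The paper complements $N_0=M_1\cap M_2$ inside $M_2$ using Lemma \ref{lem4}, writes $M_1+M_2=M_1\dot{+}N$ with $N\in\textup{Lat}\mathfrak{A}_T''$, and obtains closedness by choosing $q\in P(\mathfrak{A}_T')$ with $\textup{Ran}\,q=N$ and showing $qM_1\subseteq M_1\cap N=\{0\}$, so that $M_1\subseteq \textup{Ker}\,q$; Proposition \ref{prop6} is then applied to $T|_{M_2}=T|_{N_0}\dot{+}T|_N$ and to $T|_{M_1+M_2}=T|_{M_1}\dot{+}T|_N$. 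You instead realize both subspaces as ranges of commuting bounded idempotents $p_1\in P(\mathfrak{A}_T'')$ and $p_2\in P(\mathfrak{A}_T')$ (the commutation $p_1p_2=p_2p_1$ holding for exactly the reason you give), and observe that $M_1+M_2=\textup{Ran}(p_1+p_2-p_1p_2)$ is the range of a single bounded idempotent, hence closed --- a one-line computation that dispenses with Lemma \ref{lem4} entirely. Your splitting in the second half is the mirror image of the paper's: you complement the intersection inside $M_1$ (via the idempotent $p_1-p_1p_2$) rather than inside $M_2$, and then run Proposition \ref{prop6} twice in the same pattern. You are also more careful than the paper on one point: Proposition \ref{prop6} is stated for amenable operators, and you verify that the restrictions $T|_{M_1}$ and $T|_{M_1+M_2}$ are amenable via the restriction homomorphism and Lemma \ref{lem 3}, whereas the paper applies Proposition \ref{prop6} to restrictions without comment. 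Both routes are valid; yours buys a cleaner closedness argument and fewer auxiliary lemmas at the cost of slightly more idempotent bookkeeping.
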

\begin{proof}
Let $N_0=M_1\cap M_2$, according to  Lemma \ref{lem4}, there exists $N\in$ Lat$\mathfrak{A}_{T}''$ such that
$M_2=N_0\dot{+}N$. Choose $q\in P(\mathfrak{A}_{T}')$,  such that $\textup{Ran} q=N$. By the assumption,
$M_1\in$ Lat$\mathfrak{A}_T'$. Hence $qM_1\subset M_1\cap N=\{0\}$. Therefore $M_1\subset (I-q)\mathfrak{H}$. We
see that $M_1+M_2=M_1\dot{+}N$ is closed. This establishes the first statement of the proposition.

Since $T|_{M_2}$ is similar to a normal operator, by proposition \ref{prop6}, we get that $T|_N$ is similar to a
normal operator. By the assumption $T|_{M_1}$ is similar to a normal operator, using proposition \ref{prop6}
again, we obtain that $T|_{M_1+M_2}=T|_{M_1\dot{+}N}$ is similar to a normal operator.
\end{proof}

Now we will obtain the main theorem of this section.

\begin{thm}\label{thm2}
Assume $T$ is an amenable operator, then there exists hyperinvariant subspaces $M_1, M_2$ of $T$ such that $T$
has the form $T=T_1\dot{+}T_2$ respect to the space decomposition $\mathfrak{H}=M_1\dot{+}M_2$
 and satisfies that:

\textup{(1)} $T_1,T_2$ are amenable operators;

\textup{(2)} If $M$ is a hyperinvariant subspace of $T$ and $T|_M$ is similar to a normal operator, then
$M\subseteq M_1$, \textup{i.e.} $M_1$ is the largest hyperinvariant subspace on which $T$ is similar to a normal
operator;

\textup{(3)} For any $q\in P(\mathfrak{A}_{T_2}'')$, $T_2|_{\textup{Ran} q}$ is not similar to a normal
operator;

\textup{(4)} $\mathcal{P}(\mathfrak{A}_{T_2}'')$ is similar to an abelian von Neumman algebra with uniform
multiplicity infinite;

\textup{(5)} $\mathfrak{A}_{T}'=\mathfrak{A}_{T_1}'\dot{+}\mathfrak{A}_{T_2}'$,
$\mathfrak{A}_{T}''=\mathfrak{A}_{T_1}''\dot{+}\mathfrak{A}_{T_2}''$;

\textup{(6)} There exists no nonzero compact operator in $\mathfrak{A}_{T_2}'$.
\end{thm}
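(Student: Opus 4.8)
The plan is to realize $M_1$ as the \emph{largest} hyperinvariant subspace on which $T$ acts like a normal operator, obtained as a supremum of idempotents in $\mathfrak{A}_T''$. The first point is that, since $\mathfrak{A}_T$ is commutative, $\mathfrak{A}_T\subseteq\mathfrak{A}_T'$ and hence $\mathfrak{A}_T''\subseteq\mathfrak{A}_T'$; consequently every idempotent $p\in\mathfrak{A}_T''$ already lies in $\mathfrak{A}_T'$ and is therefore central in $\mathfrak{A}_T''$, while by the complete reducibility of $\mathfrak{A}_T'$ (Lemma~\ref{lem 1}) every hyperinvariant subspace is the range of such an idempotent (split $M\in\mathrm{Lat}\,\mathfrak{A}_T'$ by a complement in $\mathrm{Lat}\,\mathfrak{A}_T'$; the associated idempotent lies in $(\mathfrak{A}_T')'=\mathfrak{A}_T''$). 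Thus the hyperinvariant subspaces on which $T$ is similar to a normal operator are exactly the ranges of the members of $\mathcal{F}=\{p\in P(\mathfrak{A}_T''):T|_{\mathrm{Ran}\,p}\text{ is similar to a normal operator}\}\subseteq\mathcal{P}(\mathfrak{A}_T'')$. Using Proposition~\ref{prop6} (a normal-similar operator stays normal-similar on complemented invariant subspaces) and Proposition~\ref{prop5} (the sum of two normal-similar hyperinvariant subspaces is closed and carries a normal-similar restriction), $\mathcal{F}$ is closed under the finite joins $p\vee q=p+q-pq$, and countable increasing joins are controlled through the uniform idempotent bound of Lemma~\ref{lem 1} together with \cite[Corollary 26]{F1977}. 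Since $\mathcal{P}(\mathfrak{A}_T'')$ is similar to an abelian von Neumann algebra (Lemma~\ref{lem 2}), I then set $p_0=\sup\mathcal{F}$ there, $M_1=\mathrm{Ran}\,p_0$ and $M_2=\mathrm{Ran}(I-p_0)$.

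The heart of the proof is to show $p_0\in\mathcal{F}$, i.e.\ that $T|_{M_1}$ is \emph{still} similar to a normal operator. For this I would invoke the reduction theory of Section~2, decomposing $T\sim\int_\wedge^\oplus T(\lambda)\,\mu(d\lambda)$ with respect to the diagonal algebra $\mathcal{P}(\mathfrak{A}_T'')$, so that each complemented hyperinvariant subspace corresponds to a measurable set of fibres. In the multiplicity decomposition $\mathcal{P}(\mathfrak{A}_T'')\cong\sum_n\oplus\mathfrak{B}_n\oplus\mathfrak{B}_\infty$ of \cite{D1996}, Proposition~\ref{prop1} shows that every summand of finite uniform multiplicity already yields a normal-similar restriction, so those fibres lie in $\mathcal{F}$; the content is the uniform-multiplicity-$\infty$ part, where I would identify $M_1$ with the a.e.-defined set $E=\{\lambda:T(\lambda)\text{ is similar to a normal operator}\}$, check that $E$ is $\mu$-measurable, and reassemble the fibrewise similarities into a single bounded one. \textbf{This gluing is the main obstacle}: the fibrewise similarities must be selected measurably and with a common norm bound, and the only uniform control at hand is the bound $M$ on idempotents of $\mathfrak{A}_T''$ from Lemma~\ref{lem 1}, which — through the device used in Lemma~\ref{lem 2} via \cite[Corollary 17.3]{D1988} — bounds the similarities implementing normality on each piece. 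Granting this, $T|_{M_1}$ is similar to a normal operator and, by construction, $M_1$ is the largest hyperinvariant subspace with this property, which is exactly (2).

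With $p_0$ in hand the remaining items follow by bookkeeping. Because $p_0\in\mathfrak{A}_T''\subseteq\mathfrak{A}_T'$ is central in $\mathfrak{A}_T''$, both $M_1$ and $M_2$ reduce $\mathfrak{A}_T'$ and $\mathfrak{A}_T''$; hence $T=T_1\oplus T_2$ with $T_i=T|_{M_i}$, and compression by $p_0$ and $I-p_0$ gives $\mathfrak{A}_T'=\mathfrak{A}_{T_1}'\dot{+}\mathfrak{A}_{T_2}'$ and $\mathfrak{A}_T''=\mathfrak{A}_{T_1}''\dot{+}\mathfrak{A}_{T_2}''$, which is (5); these compressions are continuous homomorphisms with dense range, so $T_1,T_2$ are amenable by Lemma~\ref{lem 3}, giving (1). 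For (3), any nonzero $q\in P(\mathfrak{A}_{T_2}'')$ with $T_2|_{\mathrm{Ran}\,q}$ normal-similar lifts, via the centrality of $I-p_0$, to an idempotent $\tilde q\in\mathcal{F}$ with $\tilde q\le I-p_0$; but $\tilde q\le p_0=\sup\mathcal{F}$ forces $\tilde q=0$, a contradiction. Then (4) is a consequence of (3): in the multiplicity decomposition of $\mathcal{P}(\mathfrak{A}_{T_2}'')$ any finite-multiplicity summand would, by Proposition~\ref{prop1} applied to the amenable operator $T_2$, produce a normal-similar hyperinvariant piece of $T_2$, contradicting (3); so only the uniform-multiplicity-$\infty$ summand survives.

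For (6) I would use that $\mathfrak{A}_{T_2}'$ is completely reducible (Lemma~\ref{lem 1}, Lemma~\ref{lem4}) while, by (4), every nonzero projection of $\mathcal{P}(\mathfrak{A}_{T_2}'')$ has infinite rank. A nonzero compact operator in the reductive algebra $\mathfrak{A}_{T_2}'$ would, through Riesz theory and complete reducibility, force finite-dimensional data into the lattice, incompatible with uniform multiplicity $\infty$; here I would lean on the compact-operator analysis of Gifford \cite{G2006} for amenable operator algebras, since it is the amenability of $T_2$ — not merely its reductivity — that is needed to reconcile the finite-dimensional structure coming from a compact operator with the infinite-multiplicity algebra. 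I expect (6) to be the second most delicate point of the argument, for precisely this reason.
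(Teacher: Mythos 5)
Your overall architecture matches the paper's: both proofs hinge on producing an idempotent $p_0\in P(\mathfrak{A}_T'')$ whose range is the largest hyperinvariant subspace on which $T$ is similar to a normal operator, and items (1), (3), (4), (5) then follow just as you describe (compressions by $p_0$ and $I-p_0$, the extremal property of $p_0$, and Proposition \ref{prop1}). But there are two genuine gaps, at exactly the two places you flag as delicate.

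First, the central claim that $T|_{M_1}$ is similar to a normal operator is granted, not proven, and the route you propose for it (measurable selection of fibrewise similarities in the reduction-theory picture, with a.e.\ control of the similarity norms) is both harder than necessary and never executed. You do not need it: since $\mathfrak{H}$ is separable, the supremum of $\mathcal{F}$ in the (similarity copy of an) abelian von Neumann algebra $\mathcal{P}(\mathfrak{A}_T'')$ is already the strong limit of a countable \emph{increasing} sequence $(p_n)$ from $\mathcal{F}$ --- close $\mathcal{F}$ under finite joins, which you did via Propositions \ref{prop6} and \ref{prop5}, and pick a countable subfamily whose ranges have the same closed span. So the only thing left to prove is precisely the countable-increasing-join case: that the direct sum of the pieces $\textup{Ran}(p_{n+1}-p_n)$, each normal-similar by Proposition \ref{prop6}, is again normal-similar; this is where the uniform idempotent bound of Lemma \ref{lem 1} and the method of \cite[Corollary 26]{F1977} enter. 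That is in substance the paper's argument: it applies Zorn's Lemma, handling chains by the method of \cite[Corollary 26]{F1977}, to get a \emph{maximal} $p_0$, and then uses Proposition \ref{prop5} to upgrade maximal to largest (your observation that every hyperinvariant subspace is complemented by Lemma \ref{lem 1}, hence is the range of an idempotent in $(\mathfrak{A}_T')'=\mathfrak{A}_T''$, serves the same purpose). You mention the right tool for countable joins in passing, but then abandon it for the fibrewise-gluing problem and concede defeat there; as written, the key assertion of the theorem is unestablished.

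Second, item (6). The heuristic that a nonzero compact in $\mathfrak{A}_{T_2}'$ would ``force finite-dimensional data into the lattice, incompatible with uniform multiplicity $\infty$'' is not an argument, and Gifford's compact-operator results \cite{G2006} are not the right tool: they concern amenable algebras \emph{consisting of} compact operators, which is not the situation here. The paper's actual argument is concrete: let $L_1$ be the closed span of the ranges of all compact operators in $\mathfrak{A}_{T_2}'$ and $L_2$ the intersection of their kernels; by \cite[Lemma 3.1]{R1993}, $L_1,L_2\in \textup{Lat}\,\mathfrak{A}_{T_2}'$ and $L_1\dot{+}L_2=M_2$, so $L_1$ is the range of an idempotent in $\mathfrak{A}_{T_2}''$. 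The restriction $T_{21}=T_2|_{L_1}$ is amenable and its commutant contains a sufficient set of compact operators, hence by complete reducibility (Lemma \ref{lem 1}) and \cite[Theorem 9]{R1982} it is similar to a normal operator --- contradicting item (3), which is already proved at that stage. Note the contradiction is with (3), not with the multiplicity-$\infty$ structure of (4); the latter plays no role in (6).
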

\begin{proof}
Case1. For any $p\in P(\mathfrak{A}_{T}'')$, $T|_{\textup{Ran} p}$ is not similar to  normal operator. According
to the proof of Proposition \ref{prop1}, we obtain that $\mathcal{P}(\mathfrak{A}_{T}'')$ is similar to an
abelian von Neumman algebra with uniform multiplicity infinite. Let $M_1=0$.

Case2. There exists $p\in P(\mathfrak{A}_{T}'')$ such that $T|_{\textup{Ran} p}$ is similar to normal operator.
Then, by Zorn's Lemma and the same method in the proof of \cite[Corollary 26]{F1977}, we can show that there
exists an element $p_0\in P(\mathfrak{A}_{T}'')$ which is maximal with respect to the property that
$T|_{\textup{Ran} p_0}$ is similar to a normal operator. Using Proposition \ref{prop5}, $\textup{Ran} p_0$ is
the largest hyperinvariant subspace of $T$ on which $T$ is similar to a normal operator. Hence, $T$ has the form
$T=T_1\dot{+} T_2$ with respect to the space decomposition $\mathfrak{H}=\textup{Ran} p_0\dot{+} \textup{Ker}
p_0$ where $T_1$ is similar to a normal operator, $T_1,T_2$ are amenable operators. Let $M_1=\textup{Ran} p_0,
M_2=\textup{Ker} p_0$.

Next we will prove that for any $q\in P(\mathfrak{A}_{T_2}'')$, $T_2|_{\textup{Ran} q}$ is not similar to
normal operator. Then according to Proposition \ref{prop1} $\mathcal{P}(\mathfrak{A}_{T_2}'')$ is similar to an
abelian von Neumman algebra with uniform multiplicity infinite.

Indeed, if there exists $q\in P(\mathfrak{A}_{T_2}'')$ such that $T_2|_{\textup{Ran} q}$ is similar to a normal
operator and $q$ has the form $q= \left[\begin{array}{cc}
I&0\\
0&0\\
\end{array}\right]
 \begin{matrix}
\mbox{$\textup{Ran} q$}\\
\mbox{$\textup{Ker} q$}\\
\end{matrix}$. Then for any $A\in \mathfrak{A}_{T}'$, $A$ has the form
$$A= \left[\begin{array}{ccc}
A_{11}&&\\
&A_{22}&\\
&&A_{33}\\
\end{array}\right]
 \begin{matrix}
\mbox{$\textup{Ran} p_0$}\\
\mbox{$\textup{Ran} q$}\\
\mbox{$\textup{Ker} q$}\\
\end{matrix}.$$
 Let
$$R= \left[\begin{array}{ccc}
I&&\\
&I&\\
&&0\\
\end{array}\right]
 \begin{matrix}
\mbox{$\textup{Ran} p_0$}\\
\mbox{$\textup{Ran} q$}\\
\mbox{$\textup{Ker} q$}\\
\end{matrix}.$$
Then $R\in P(\mathfrak{A}_{T}'')$. By the assumption $T|_{\textup{Ran} R}$ is similar to a normal operator which
contradicts to the maximal property of $p_0$.

At last we will prove that there exists no nonzero compact operator in $\mathfrak{A}_{T_2}'$.

Indeed, if there exists a nonzero compact operator $k_0\in\mathfrak{A}_{T_2}'$, let $L_1$ denote the subspace
spanned by the ranges of all compact operators in $\mathfrak{A}_{T_2}'$, and $L_2$ the intersection of their
kernel, by \cite[Lemma 3.1]{R1993}, both $L_1, L_2$ lie in Lat$\mathfrak{A}_{T_2}'$ and
$L_1\dot{+}L_2=\textup{\textup{Ker}} p_0$. Considering the restricting $T_2|_{L_1}$, assume $T_{21}=T_2|_{L_1}$,
then $T_{21}$ is an amenable operator and $\mathfrak{A}_{T_{21}}'$ contain a sufficient set of compact
operators. By Lemma \ref{lem 1} and \cite[Theorem 9]{R1982}, $T_{21}$ is similar to a normal operator which
contradicts to the above discussion.
\end{proof}

Trivial modifications adapt the proof of Theorem \ref{thm2}, we obtain the following theorem which decomposes
amenable operators by the invariant subspaces of them. The proof is similar to Theorem \ref{thm2} and we omit
it.

\begin{thm}\label{thm22}
Assume $T$ is an amenable operator, then there exists invariant subspaces $N_1, N_2$ of $T$ such that $T$ has
the form $T=A_1\dot{+}A_2$ respect to the space decomposition $\mathfrak{H}=N_1\dot{+}N_2$
 and satisfies that:

\textup{(1)} $A_1,A_2$ are amenable operators;

\textup{(2)} If $N$ is an invariant subspace of $T$ such that $N_1\subseteq N$ and $T|_N$ is similar to a normal
operator, then $N= N_1$, \textup{i.e.} $N_1$ is the maximal invariant subspace on which $T$ is similar to a
normal operator;

\textup{(3)} For any $q\in P(\mathfrak{A}_{T_2}')$, $T_2|_{\textup{Ran} q}$ is not similar to a normal operator;

\textup{(4)} If  $\mathcal{P}(\mathfrak{A}_{T_2}')$ contains a subalgebra which is similar to an abelian von
Neumman algebra then the von Neumman algebra has the uniform multiplicity infinite.
\end{thm}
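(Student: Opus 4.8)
The plan is to run the argument of Theorem \ref{thm2} with the commutant $\mathfrak{A}_T'$ playing the role that the double commutant $\mathfrak{A}_T''$ played there, so that the idempotents producing the decomposition now lie in $P(\mathfrak{A}_T')$ and therefore cut out genuine invariant (rather than hyperinvariant) subspaces. Concretely, for $q\in P(\mathfrak{A}_T')$ both $\textup{Ran}\,q$ and $\textup{Ker}\,q$ belong to Lat$\mathfrak{A}_T$ and $T=T|_{\textup{Ran}\,q}\dot{+}T|_{\textup{Ker}\,q}$ is block diagonal, which is exactly what is needed to invoke Proposition \ref{prop6}.

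First I would dispose of the trivial case in which $T|_{\textup{Ran}\,q}$ fails to be similar to a normal operator for every $q\in P(\mathfrak{A}_T')$, setting $N_1=0$ and $A_2=T$. Otherwise, I would produce a maximal invariant subspace $N_1$ on which $T$ is similar to a normal operator. Following the method of \cite[Corollary 26]{F1977}, one orders by inclusion the invariant subspaces $N$ with $T|_N$ similar to a normal operator and checks that every chain has an upper bound: here the uniform bound on idempotents furnished by Lemma \ref{lem 1}, together with the reduction property of the amenable algebra $\mathfrak{A}_T$ (\cite{G2006}), lets one pass to the closure of the union without losing the uniform similarity constant. Zorn's Lemma then yields a maximal such $N_1$, and the complete reducibility of $\mathfrak{A}_T$ provides an invariant complement, i.e.\ an idempotent $q_0\in P(\mathfrak{A}_T')$ with $\textup{Ran}\,q_0=N_1$. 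Set $N_2=\textup{Ker}\,q_0$, $A_1=T|_{N_1}$, $A_2=T|_{N_2}$. Statement (2) is then immediate from maximality, and (1) follows because each compression $X\mapsto X|_{N_i}$ is a continuous homomorphism of $\mathfrak{A}_T$ with dense range into $\mathfrak{A}_{A_i}$, so Lemma \ref{lem 3} yields the amenability of $A_1$ and $A_2$.

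For (3) I would argue by contradiction exactly as in Theorem \ref{thm2}: if some $q\in P(\mathfrak{A}_{A_2}')$ had $A_2|_{\textup{Ran}\,q}$ similar to a normal operator, then the idempotent $R=q_0\oplus q$ (the identity on $N_1$ and $q$ on $N_2$) lies in $P(\mathfrak{A}_T')$, its range is the invariant subspace $N_1\oplus\textup{Ran}\,q\supseteq N_1$, and $T|_{\textup{Ran}\,R}=A_1\oplus A_2|_{\textup{Ran}\,q}$ is similar to a normal operator by Proposition \ref{prop6}; this contradicts the maximality of $N_1$ unless $\textup{Ran}\,q=0$. Statement (4) then follows from (3) via Proposition \ref{prop1}: in the multiplicity decomposition of any abelian von Neumann subalgebra of $\mathcal{P}(\mathfrak{A}_{A_2}')$, the central projection onto a finite-multiplicity summand is (after the ambient similarity) an idempotent in $P(\mathfrak{A}_{A_2}')$ on whose range the argument of Proposition \ref{prop1} makes $A_2$ similar to a normal operator, contradicting (3); hence only the summand of uniform multiplicity infinity can survive.

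The main obstacle is the second paragraph, namely producing $N_1$ as the range of an idempotent in $\mathfrak{A}_T'$. In Theorem \ref{thm2} this step was clean because, after Lemma \ref{lem 2} and Notation \ref{nota}, one worked inside $\mathcal{P}(\mathfrak{A}_T'')$, which is similar to an honest abelian von Neumann algebra, so its projection lattice and the Zorn upper bound were automatic. On the commutant side $\mathcal{P}(\mathfrak{A}_T')$ need not be similar to a von Neumann algebra --- this is precisely why (4) is stated conditionally --- so I must instead lean on the complete reducibility and the uniform idempotent bound from Lemma \ref{lem 1} both to complement $N_1$ and to control the limit in the chain argument. Verifying that these weaker tools suffice on the invariant-subspace side is the one place where the phrase ``trivial modifications'' hides genuine care.
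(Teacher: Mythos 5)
Your proposal is correct and follows essentially the same route as the paper: the paper omits the proof of Theorem \ref{thm22}, saying it is obtained from the proof of Theorem \ref{thm2} by trivial modifications, and your argument is precisely that adaptation, with idempotents in $P(\mathfrak{A}_T')$ (equivalently, invariant subspaces, via the reduction property of amenable algebras) playing the role that idempotents in $P(\mathfrak{A}_T'')$ played there, and with the same contradiction argument for (3) and the same use of Proposition \ref{prop1} for (4). Your closing remark---that the Zorn chain step must now lean on the uniform idempotent bound and the reduction property because $\mathcal{P}(\mathfrak{A}_T')$ need not be similar to a von Neumann algebra---accurately identifies the one place where the paper's ``trivial modifications'' conceal genuine work.
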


\begin{rem}
If the answer to Conjecture 1.1 is positive, by Theorem 2.6, every amenable is similar to a normal operator.
Then, for the above theorem $M_1=N_1=\mathfrak{H}$. That is to say, the two decompositions of theorem
 3.6 and 3.7 are the same. The remainder of this section, we will prove that
the two decompositions are the same which supporting Conjecture 1.1.
\end{rem}

\begin{lem}\cite{F1977}\label{lem5}
If $T\in \mathfrak{B}(\mathfrak{H})$ is an amenable operator and there exist a one-to-one bounded linear map
$W:\mathfrak{H}\rightarrow\mathfrak{H}_2$, a bounded linear map $V:\mathfrak{H}_1\rightarrow\mathfrak{H}$ with
dense range and  operators $S_1\in \mathfrak{B}(\mathfrak{H}_1)$, $S_2\in \mathfrak{B}(\mathfrak{H}_2)$ which
are similar to normal operators such that $TV=VS_1$ and $WT=S_2W$, then $T$ is similar to a normal operator.
\end{lem}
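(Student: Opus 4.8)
The plan is to exploit the intertwining relations to build a single quasi-affinity that transforms $T$ into a normal operator, by combining the two one-sided pieces of information. The hypotheses say that $T$ receives an amenable-compatible map from an operator $S_1$ similar to normal (via $V$, which has dense range) and maps onto an operator $S_2$ similar to normal (via $W$, which is injective). Since $S_1$ and $S_2$ are each similar to normal operators, I may replace them outright by normal operators $N_1 = X_1 S_1 X_1^{-1}$ and $N_2 = X_2 S_2 X_2^{-1}$, absorbing the similarities into $V$ and $W$; after this normalization the relations become $TV = V N_1$ (with $V$ still dense range) and $W T = N_2 W$ (with $W$ still injective). So from the outset I may assume $S_1, S_2$ are genuinely normal.

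The heart of the argument is that the first relation says $T$ is a \emph{quasi-affine transform} of the normal operator $N_1$ (equivalently $N_1$ injects densely into $T$), and the second says $T$ injects densely into the normal operator $N_2$. The key structural fact I would invoke is that such two-sided intertwining by normals, together with amenability, forces similarity. Concretely, I would first extract from $TV = V N_1$ and $WT = N_2 W$ that the spectral data of $T$ are squeezed between those of the two normals. One then uses the amenability of $\mathfrak{A}_T$ to control the idempotents: by Lemma \ref{lem 1} there is a uniform bound $M$ on the norms of idempotents in $\mathfrak{A}_T''$, and by Lemma \ref{lem 2} and Notation \ref{nota} the relevant commutant structure is reducible after a similarity. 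The idea is that the dense-range and injectivity hypotheses let me transport spectral projections of $N_1$ and $N_2$ across the intertwiners to produce a rich, uniformly bounded family of idempotents in the commutant of $T$, which by Lemma \ref{lem 2} is similar to a family of self-adjoint projections generating an abelian von Neumann algebra.

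From there I would run the now-standard reduction: having made $\mathfrak{A}_T'$ reducible via a single similarity $Y$ (as in Notation \ref{nota}), I would apply a maximal direct-integral decomposition $\mathfrak{A}_T' \sim \int_\wedge^\oplus \mathfrak{A}_T'(\lambda)\,\mu(d\lambda)$ exactly as in the proof of Theorem \ref{thm1}. The intertwining relations disintegrate fiberwise, so that almost every fiber $T(\lambda)$ is simultaneously a dense-range quasi-affine transform of a normal operator and injects densely into a normal operator, \emph{and} is amenable by Lemma \ref{lem 3}. On each such fiber $\mathfrak{A}_T'(\lambda)$ is transitive by Lemma \ref{Fong}; a transitive operator that is sandwiched between two normals by quasi-affinities must be a scalar, so almost every $T(\lambda)$ is scalar and the disintegrated $T$ is normal, whence the original $T$ is similar to a normal operator.

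The main obstacle I anticipate is the disintegration of the two \emph{external} intertwiners $V$ and $W$: unlike the operators in $\mathfrak{A}_T$, the maps $V$ and $W$ act between $\mathfrak{H}$ and the auxiliary spaces $\mathfrak{H}_1, \mathfrak{H}_2$, so they need not be decomposable with respect to the diagonal algebra $\mathcal{P}(\mathfrak{A}_T'')$ chosen for $T$. The delicate point is to arrange the direct-integral decompositions of $\mathfrak{H}_1$ and $\mathfrak{H}_2$ compatibly with that of $\mathfrak{H}$, so that $V$ and $W$ split as $\int^\oplus V(\lambda)$ and $\int^\oplus W(\lambda)$ with $V(\lambda)$ having dense range and $W(\lambda)$ injective for almost all $\lambda$; preserving the injectivity of $W$ and the dense range of $V$ almost everywhere under disintegration is exactly the technical crux on which the fiberwise scalarness, and hence the whole argument, depends.
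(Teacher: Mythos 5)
The paper does not actually prove this lemma --- it is imported verbatim from Fong \cite{F1977} --- so your attempt has to stand on its own, and as written it does not. The fatal gap is the fiberwise conclusion. After disintegration, what Lemma \ref{Fong} gives you is that $\mathfrak{A}_T'(\lambda)$ is transitive, i.e.\ that $T(\lambda)$ has no nontrivial \emph{hyperinvariant} subspace. But the subspaces your sandwich produces --- closures $\overline{V(\lambda)E_1(\Delta)\mathfrak{H}_1(\lambda)}$ of images of spectral subspaces of $N_1(\lambda)$, and preimages $W(\lambda)^{-1}\bigl(E_2(\Delta)\mathfrak{H}_2(\lambda)\bigr)$ of spectral subspaces of $N_2(\lambda)$ --- are merely \emph{invariant} under $T(\lambda)$, not hyperinvariant, so transitivity of the fiber commutant says nothing about them. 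Your assertion that ``a transitive operator sandwiched between two normals by quasi-affinities must be a scalar'' is exactly the point that needs proof: a Hoover-type transfer of hyperinvariant subspaces requires genuine quasisimilarity to a \emph{single} normal operator (quasi-affinities in both directions between the same pair), whereas here $V$ need not be injective, $W$ need not have dense range, and they connect $T(\lambda)$ to two \emph{different} normals. The only evident way to justify the claim is to apply the lemma being proven to the fiber (amenable $+$ sandwiched $\Rightarrow$ similar to normal, then transitive commutant $\Rightarrow$ scalar), which is circular.

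Two further steps are also unsupported. First, the disintegration of $V$ and $W$, which you correctly flag as the crux but never close: these maps act between $\mathfrak{H}$ and auxiliary spaces carrying no action of the diagonal algebra $\mathcal{P}(\mathfrak{A}_T'')$, so writing $V=\int_\wedge^\oplus V(\lambda)\mu(d\lambda)$ requires constructing abelian von Neumann algebras on $\mathfrak{H}_1,\mathfrak{H}_2$ that $V$ and $W$ intertwine with the chosen (maximal) diagonal algebra on $\mathfrak{H}$; nothing in the hypotheses puts the subspaces $\overline{VE_1(\Delta)\mathfrak{H}_1}$ into $\textup{Lat}\,\mathcal{P}(\mathfrak{A}_T'')$, and maximality of the diagonal algebra (needed for fiber transitivity) cannot simply be traded for this compatibility. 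Second, your earlier step of ``transporting spectral projections across the intertwiners to produce uniformly bounded idempotents in the commutant'' does not work as stated: non-invertible intertwiners transport invariant subspaces, not projections; converting an invariant subspace of $T$ into an idempotent commuting with $T$ requires an invariant \emph{complement}, i.e.\ complete reducibility of $\mathfrak{A}_T$ itself, and even then the resulting idempotents lie in $\mathfrak{A}_T'$ rather than in $\mathfrak{A}_T''$, where the norm bound of Lemma \ref{lem 1} and the similarity of Lemma \ref{lem 2} are available. A correct proof has to use amenability in an essentially different (and more delicate) way than the transitive-fiber argument of Theorem \ref{thm1}.
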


\begin{cor}
Assume $T=B_1B_2$ is an amenable operator, where $B_1,B_2$ are positive operators, then $T$ is similar to a
normal operator.
\end{cor}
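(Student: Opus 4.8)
The plan is to exhibit the intertwining maps required by Lemma~\ref{lem5}, using the positive factors $B_1$ and $B_2$ to build the one-to-one map $W$ and the dense-range map $V$. Write $T=B_1B_2$ with $B_1,B_2\geq 0$. The key observation is that $T=B_1B_2$ is similar, on the closure of the appropriate range, to the self-adjoint (hence normal) operator $B_1^{1/2}B_2B_1^{1/2}$, and likewise to $B_2^{1/2}B_1B_2^{1/2}$; these similarities are implemented by the square roots $B_1^{1/2}$ and $B_2^{1/2}$. Concretely, setting $S=B_1^{1/2}B_2B_1^{1/2}$, one checks the intertwining $T\,B_1^{1/2}=B_1^{1/2}S$, so that $V=B_1^{1/2}$ maps $S$ to $T$; and setting $S'=B_2^{1/2}B_1B_2^{1/2}$ one has $B_2^{1/2}T=S'B_2^{1/2}$, so that $W=B_2^{1/2}$ intertwines $T$ with $S'$ on the other side.

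First I would verify these two algebraic identities, which are immediate from $T=B_1B_2$ together with the factorizations $B_1=B_1^{1/2}B_1^{1/2}$ and $B_2=B_2^{1/2}B_2^{1/2}$: indeed $T B_1^{1/2}=B_1^{1/2}(B_1^{1/2}B_2 B_1^{1/2})=B_1^{1/2}S$ and $B_2^{1/2}T=(B_2^{1/2}B_1 B_2^{1/2})B_2^{1/2}=S'B_2^{1/2}$. Both $S$ and $S'$ are self-adjoint, hence normal, and in particular trivially similar to normal operators, so they play the roles of $S_1$ and $S_2$ in Lemma~\ref{lem5}. Then I would address the hypotheses on $V=B_1^{1/2}$ (dense range) and $W=B_2^{1/2}$ (injectivity). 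These cannot both hold without some reduction, since $B_1$ or $B_2$ may have kernel; the standard device is to first pass to the restriction of $T$ to the invariant subspace that is the complement of the combined kernels and reduce to the case where $B_1$ and $B_2$ are injective with dense range. On a piece where $B_1$ has trivial kernel, $B_1^{1/2}$ has dense range, giving the $V$-hypothesis; on a piece where $B_2$ has trivial kernel, $B_2^{1/2}$ is injective, giving the $W$-hypothesis. One must check that the kernel pieces contribute nothing or are themselves handled by a self-adjoint direct summand.

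The main obstacle will be the bookkeeping of kernels: arranging simultaneously that $V=B_1^{1/2}$ has dense range and $W=B_2^{1/2}$ is one-to-one, while respecting that $T=B_1B_2$ may fail to be injective or to have dense range. I expect the cleanest route is to decompose $\mathfrak{H}$ according to $\ker B_1$, $\ker B_2$, and their complements, observe that $T$ annihilates $\overline{\operatorname{Ran}B_2}^{\perp}=\ker B_2$ and maps into $\overline{\operatorname{Ran}B_1}$, and then apply Lemma~\ref{lem5} to the genuinely nondegenerate part, where both intertwining identities and the injectivity/dense-range conditions hold simultaneously; the degenerate summands reduce to lower-dimensional or zero pieces that are trivially normal. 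Once the nondegenerate part is shown similar to a normal operator via Lemma~\ref{lem5}, reassembling the summands—each similar to a normal operator—yields that $T$ itself is similar to a normal operator.
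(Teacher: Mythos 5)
Your key step is exactly the one in the paper: take $V=B_1^{1/2}$, $W=B_2^{1/2}$, $S_1=B_1^{1/2}B_2B_1^{1/2}$, $S_2=B_2^{1/2}B_1B_2^{1/2}$, verify $TB_1^{1/2}=B_1^{1/2}S_1$ and $B_2^{1/2}T=S_2B_2^{1/2}$, and invoke Lemma~\ref{lem5}. This works verbatim once $B_1$ has dense range and $B_2$ is one-to-one. The genuine gap is in your reduction to that case, which you treat as bookkeeping with direct summands. The kernels of $B_1$ and $B_2$ do \emph{not} split $T$ into direct summands, and the subspace you propose to restrict to need not be invariant: with respect to $\mathfrak{H}=\ker B_2\oplus(\ker B_2)^{\perp}$ one has $B_2=0\oplus\widetilde{B_2}$ and $T=\left[\begin{smallmatrix}0 & B_{12}\widetilde{B_2}\\ 0 & B_{22}\widetilde{B_2}\end{smallmatrix}\right]$, where $B_{12}$ is the off-diagonal corner of $B_1$. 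The corner $B_{12}\widetilde{B_2}$ is in general nonzero, $(\ker B_2)^{\perp}$ (and likewise $(\ker B_1+\ker B_2)^{\perp}$) is not $T$-invariant, and knowing that the diagonal compression $B_{22}\widetilde{B_2}$ is similar to a normal operator says nothing yet about $T$: a block upper-triangular operator whose diagonal is similar to normal need not be similar to its diagonal. So the final step of "reassembling the summands" has nothing to reassemble.

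Producing those summands is precisely where the paper spends the amenability hypothesis a second time (beyond Lemma~\ref{lem5}): since $\ker B_2\in\operatorname{Lat}T$, amenability of $T$ and \cite[Lemma~2.8]{Y} yield a similarity of the form $\left[\begin{smallmatrix}I & \ast\\ 0 & I\end{smallmatrix}\right]$ under which $T$ becomes $0\oplus B_{22}\widetilde{B_2}$; the surviving piece is again amenable (Lemma~\ref{lem 3}) and is again a product of two positive operators, now with the second factor injective. A second application of the same argument along $(\ker B_1)^{\perp}\oplus\ker B_1$ replaces $T$ by $\widetilde{B_1}B_{11}\oplus 0$, where the paper additionally uses the factorization $\widetilde{B_1}B_{12}=\widetilde{B_1}B_{11}S$ furnished by \cite[Lemma~2.8]{Y}, together with injectivity of $B_2$, to conclude that $B_{11}$ is one-to-one---so that the surviving summand genuinely satisfies the dense-range/injectivity hypotheses your Lemma~\ref{lem5} argument requires. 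With these two amenability-based splittings inserted in place of your "standard device", your proposal becomes the paper's proof; without them, the reduction fails.
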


\begin{proof}
Assume $B_1, B_2$ have the forms
$$B_2=\left[\begin{array}{cc}
0&\\
&\widetilde{B_2}\\
\end{array}\right],
B_1=\left[\begin{array}{cc}
B_{11}&B_{12}\\
B_{12}^*&B_{22}\\
\end{array}\right],$$ respect to the space decomposition $\mathfrak{H}=\textup{Ker} B_2\oplus(\textup{Ker} B_2)^\bot$  where $\widetilde{B_2}$ is one-to-one and $B_{11}, B_{22}$ are
positive operators. Thus $T$
 has the form $T=\left[\begin{array}{cc}
0&B_{12}\widetilde{B_2}\\
0&B_{22}\widetilde{B_2}\\
\end{array}\right]$
respect to the space decomposition. Since $T$ is an amenable operator, by \cite[Lemma 2.8]{Y}, $T$ is similar to
$\left[\begin{array}{cc}
0&0\\
0&B_{22}\widetilde{B_2}\\
\end{array}\right]$. Thus without loss of generality, we may assume that $B_2$ is one-to-one.

Assume that $B_1, B_2$ has the form
$$B_1=\left[\begin{array}{cc}
\widetilde{B_1}&\\
&0\\
\end{array}\right],
B_2=\left[\begin{array}{cc}
B_{11}&B_{12}\\
B_{12}^*&B_{22}\\
\end{array}\right],$$
respect to the space decomposition $\mathfrak{H}=(\textup{Ker} B_1)^\bot\oplus \textup{Ker} B_1$ where
$\widetilde{B_1}$ is one-to-one and has dense range and $B_{11}, B_{22}$ are positive operators. Thus $T$
 has the form $T=\left[\begin{array}{cc}
\widetilde{B_1}B_{11}&\widetilde{B_1}B_{12}\\
0&0\\
\end{array}\right]$
respect to the space decomposition. Since $T$ is an amenable operator, by \cite[Lemma 2.8]{Y}, $T$ is similar to
$\left[\begin{array}{cc}
\widetilde{B_1}B_{11}&0\\
0&0\\
\end{array}\right]$ and there exists an operator $S$ such that $\widetilde{B_1}B_{12}=\widetilde{B_1}B_{11}S$.
Note that $\widetilde{B_1}, B_2$ are one-to-one,  hence $B_{12}=B_{11}S$, and $B_{11}$ is one-to-one. Thus
without loss of generality, we may assume that $B_1$ has dense range and $B_2$ is one-to-one.

Note that $B_1^{\frac{1}{2}}B_2B_1^{\frac{1}{2}}, B_2^{\frac{1}{2}}B_1B_2^{\frac{1}{2}}$ are positive operators
and $TB_1^{\frac{1}{2}}=B_1^{\frac{1}{2}}B_1^{\frac{1}{2}}B_2B_1^{\frac{1}{2}}$ and
$B_2^{\frac{1}{2}}T=B_2^{\frac{1}{2}}B_1B_2^{\frac{1}{2}}B_2^{\frac{1}{2}}$, by Lemma \ref{lem5}, $T$ is similar
to a normal operator.
\end{proof}

\begin{thm}
The two decompositions for an amenable operator in  Theorem \ref{thm2}, \ref{thm22} are the same.
\end{thm}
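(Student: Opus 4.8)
The plan is to show the two decompositions agree by proving $M_1 = N_1$ (which then forces $M_2 = N_2$ and $T_1 = A_1$, $T_2 = A_2$), via a double inclusion. Throughout I use that $\mathfrak{A}_T$ is abelian, so $\mathfrak{A}_T \subseteq \mathfrak{A}_T'$ and hence $\mathfrak{A}_T'' = (\mathfrak{A}_T')' \subseteq \mathfrak{A}_T'$; consequently $\textup{Lat}\,\mathfrak{A}_T' \subseteq \textup{Lat}\,\mathfrak{A}_T'' \subseteq \textup{Lat}\,T$, so every hyperinvariant subspace is invariant. In particular $M_1$, being hyperinvariant with $T|_{M_1}$ similar to a normal operator, is a candidate for the invariant problem solved by $N_1$.

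First I would prove $M_1 \subseteq N_1$. Writing $N_1 = \textup{Ran}\,p_0'$ with $p_0' \in P(\mathfrak{A}_T')$, every element of $\mathfrak{A}_T'' = (\mathfrak{A}_T')'$ commutes with $p_0'$, so $N_1 \in \textup{Lat}\,\mathfrak{A}_T''$. Since $M_1 \in \textup{Lat}\,\mathfrak{A}_T'$ and both $T|_{M_1}$, $T|_{N_1}$ are similar to normal operators, Proposition \ref{prop5} shows $M_1 + N_1$ is closed and $T|_{M_1+N_1}$ is similar to a normal operator. But $M_1 + N_1$ is an invariant subspace containing $N_1$, so the maximality in Theorem \ref{thm22}(2) forces $M_1 + N_1 = N_1$, i.e. $M_1 \subseteq N_1$.

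The reverse inclusion $N_1 \subseteq M_1$ is the substance. Using Theorem \ref{thm2}(5), $p_0'$ commutes with the projection onto $M_1$ and so splits as $I_{M_1}\oplus p_2$ with $p_2 \in P(\mathfrak{A}_{T_2}')$ (the component on $M_1$ is the identity because $M_1 \subseteq N_1$), giving $N_1 = M_1 \dot{+} R$ with $R = \textup{Ran}\,p_2 = N_1 \cap M_2$; it therefore suffices to show $R = \{0\}$. Since $R$ is the range of an idempotent in $\mathfrak{A}_T'$, the restriction map $\mathfrak{A}_T \to \mathfrak{A}_{T|_{N_1}}$ is a norm-decreasing homomorphism with dense range, so by Lemma \ref{lem 3} the operator $T|_{N_1}$ is amenable; as $T|_{N_1} = T_1 \oplus T_2|_R$ is block diagonal and similar to a normal operator, Proposition \ref{prop6} yields that $T_2|_R$ is similar to a normal operator. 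The goal is then to contradict the extremality of $T_2$ in Theorem \ref{thm2}(3) by producing a nonzero $q \in P(\mathfrak{A}_{T_2}'')$ with $\textup{Ran}\,q \subseteq R$: such a $q$ commutes with $p_2$, and by Fuglede's theorem applied to the normal operator similar to $T_2|_R$ the restriction of a similar-to-normal operator to the range of a commuting idempotent is again similar to a normal operator, so $T_2|_{\textup{Ran}\,q}$ would be similar to a normal operator, contradicting Theorem \ref{thm2}(3); hence $R = \{0\}$ and $N_1 = M_1$.

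The main obstacle is exactly this last step. Theorem \ref{thm2}(3) only forbids \emph{full} reducing pieces coming from the double commutant $P(\mathfrak{A}_{T_2}'')$, whereas $p_2$ lies a priori only in the commutant $P(\mathfrak{A}_{T_2}')$. In the direct-integral model of Theorem \ref{thm2}(4), where $\mathcal{P}(\mathfrak{A}_{T_2}'')$ has uniform multiplicity infinite, $p_2$ is decomposable over $\mathcal{P}(\mathfrak{A}_{T_2}'')$, but its fibres may be proper idempotents, so $R$ need not contain the range of any nonzero $q \in P(\mathfrak{A}_{T_2}'')$ and the bridge above can fail verbatim. The real work is to rule out a nonzero partial-fibre piece $R$ on which $T_2$ is similar to a normal operator, using the uniform-infinite-multiplicity structure together with the absence of nonzero compact operators in $\mathfrak{A}_{T_2}'$ from Theorem \ref{thm2}(4),(6); here I expect to invoke the intertwining criterion of Lemma \ref{lem5} to manufacture, from the similar-to-normal operator $T_2|_R$ and the infinite multiplicity, a genuine full-fibre similar-to-normal summand of $T_2$, contradicting Theorem \ref{thm2}(3). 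Carrying out this upgrade from the commutant to the double commutant is where the argument is most delicate.
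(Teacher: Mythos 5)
Your first inclusion $M_1 \subseteq N_1$ is correct, and it is essentially the paper's own reduction step: the paper likewise combines Proposition \ref{prop5} with the maximality in Theorem \ref{thm22}(2) and the largest-hyperinvariant-subspace property in Theorem \ref{thm2}(2), and observes that the whole theorem reduces to proving $N_1 \in \textup{Lat}\mathfrak{A}_T'$. Your reduction of the reverse inclusion to the statement ``$R = N_1\cap M_2 = \{0\}$, where $R=\textup{Ran}\,p_2$ for an idempotent $p_2\in P(\mathfrak{A}_{T_2}')$ and $T_2|_R$ is similar to a normal operator'' is also sound. But there the proof stops: as you yourself acknowledge, Theorem \ref{thm2}(3) only excludes ranges of idempotents in the \emph{double} commutant $P(\mathfrak{A}_{T_2}'')$, your $p_2$ lies a priori only in the commutant, and a nonzero $q\in P(\mathfrak{A}_{T_2}'')$ with $\textup{Ran}\,q\subseteq R$ need not exist. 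The closing paragraph (an ``upgrade from the commutant to the double commutant'' via uniform infinite multiplicity and Lemma \ref{lem5}) is a plan, not an argument, and it is exactly where all the content of the theorem lives. So the proposal has a genuine gap: the hard half of the equivalence is missing.

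For comparison, the paper never attempts such an upgrade; it proves directly, by contradiction, that $N_1$ is hyperinvariant. If some operator in $\mathfrak{A}_T'$ fails to leave $N_1$ invariant, then: (i) writing $T=T_1\dot{+}T_2$ with respect to $\mathfrak{H}=N_1\dot{+}N_2$, the strictly lower-triangular operator $S$ whose only nonzero entry is the corner $Y\colon N_1\to N_2$ again lies in $\mathfrak{A}_T'$; (ii) complete reducibility of $\mathfrak{A}_T'$ (\cite[Lemma 4.11]{G2006}) produces a nonzero opposite corner $L$ with entry $X\colon N_2\to N_1$ in $\mathfrak{A}_T'$; (iii) amenability, through \cite[Lemma 2.8]{Y}, supplies similarities (fixing the relevant corners) that block-diagonalize $T$ with respect to the refinements $N_1\oplus\overline{\textup{Ran}\,Y}\oplus(N_2\ominus\overline{\textup{Ran}\,Y})$ and $N_1\oplus \textup{Ker}\,X\oplus(N_2\ominus \textup{Ker}\,X)$; and (iv) the commutation relations then give a dense-range intertwiner $\tilde{Y_2}T_1=T_{34}\tilde{Y_2}$ and an injective intertwiner $T_1\tilde{X}=\tilde{X}T_{34}$, with $T_1$ similar to normal, so Lemma \ref{lem5} forces the nonzero piece $T_{34}$ of $T|_{N_2}$ to be similar to a normal operator, contradicting Theorem \ref{thm22}. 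This corner-and-intertwining mechanism at the level of the commutant is the missing idea; note that once one has it, hyperinvariance of $N_1$ plus Theorem \ref{thm2}(2) yields $N_1\subseteq M_1$ immediately, so your auxiliary subspace $R$ and the attempted appeal to Theorem \ref{thm2}(3) are not needed at all.
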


\begin{proof}
According to Theorem \ref{thm2}, \ref{thm22}, and Proposition \ref{prop5}, it is suffices to proof that
$N_1\in
Lat \mathfrak{A}_T'$.

In fact, if not. $T$ has the form $T= \left[\begin{array}{cc}
T_1&\\
&T_2\\
\end{array}\right]
 \begin{matrix}
\mbox{$N_1$}\\
\mbox{$N_2$}\\
\end{matrix}$
and there exists
 $S= \left[\begin{array}{cc}
0&0\\
Y&0\\
\end{array}\right]
 \begin{matrix}
\mbox{$N_1$}\\
\mbox{$N_2$}\\
\end{matrix}\in \mathfrak{A}_T'$ where $Y\neq 0$.
Note that $S$ and $T$ have the form $$S= \left[\begin{array}{ccc}
0&0&0\\
\tilde{Y}&0&0\\
0&0&0\\
\end{array}\right]
 \begin{matrix}
\mbox{$N_1$}\\
\mbox{$\overline{\textup{Ran} Y}$}\\
\mbox{$N_2\ominus\overline{\textup{Ran} Y}$}\\
\end{matrix},
T= \left[\begin{array}{ccc}
T_1&0&0\\
&T_{21}&T_{22}\\
&T_{23}&T_{24}\\
\end{array}\right]
 \begin{matrix}
\mbox{$N_1$}\\
\mbox{$\overline{\textup{Ran} Y}$}\\
\mbox{$N_2\ominus\overline{\textup{Ran} Y}$}\\
\end{matrix},$$
where $\tilde{Y}$ has dense range. Note that $TS=ST$, we get that $T_{23}=0$. Since $T$ is amenable, by
\cite[Lemma 2.8]{Y} there exists an operator $B: N_2\ominus\overline{\textup{Ran} Y}\rightarrow
\overline{\textup{Ran} Y}$ such that
$$\left[\begin{array}{ccc}
I&0&0\\
&I&B\\
&&I\\
\end{array}\right]
\left[\begin{array}{ccc}
T_1&0&0\\
&T_{21}&T_{22}\\
&&T_{24}\\
\end{array}\right]
\left[\begin{array}{ccc}
I&0&0\\
&I&-B\\
&&I\\
\end{array}\right]
=\left[\begin{array}{ccc}
T_1&0&0\\
&T_{21}&0\\
&&T_{24}\\
\end{array}\right].$$ Moreover,

$$\left[\begin{array}{ccc}
I&0&0\\
&I&B\\
&&I\\
\end{array}\right]
\left[\begin{array}{ccc}
0&0&0\\
\tilde{Y}&0&0\\
0&0&0\\
\end{array}\right]
 \left[\begin{array}{ccc}
I&0&0\\
&I&-B\\
&&I\\
\end{array}\right]=
\left[\begin{array}{ccc}
0&0&0\\
\tilde{Y}&0&0\\
0&0&0\\
\end{array}\right].$$
Hence, we can assume that $Y$ has dense range. Using $T$ is amenable again, there exists
$L=\left[\begin{array}{cc}
0&X\\
0&0\\
\end{array}\right]
 \begin{matrix}
\mbox{$N_1$}\\
\mbox{$N_2$}\\
\end{matrix}\in \mathfrak{A}_T'$, where $X\neq 0$, by \cite[ lemma 4.11]{G2006}.
Similar to the decomposition to $S$ and $T$, we get that $S$, $L$ and $T$ have the form
$$S=
\left[\begin{array}{ccc}
0&0&0\\
\tilde{Y_1}&0&0\\
\tilde{Y_2}&0&0\\
\end{array}\right],
L= \left[\begin{array}{ccc}
0&0&\tilde{X}\\
0&0&0\\
0&0&0\\
\end{array}\right],
T= \left[\begin{array}{ccc}
T_1&0&0\\
&T_{31}&T_{32}\\
&T_{33}&T_{34}\\
\end{array}\right],$$
respect to the space decomposition $\mathfrak{H}=N_1\oplus \textup{Ker} X\oplus(N_2\ominus \textup{Ker} X)$,
where $\tilde{X}$ is one-to-one, and $\tilde{Y_1},\tilde{Y_2}$ has dense range. Note that $LT=TL$, we get that
$T_{33}=0$.
 Using $T$ is amenable again, there exists an operator
$C: N_2\ominus \textup{Ker} X\rightarrow \textup{Ker} X$ such that
$$\left[\begin{array}{ccc}
I&0&0\\
&I&C\\
&&I\\
\end{array}\right]
\left[\begin{array}{ccc}
T_1&0&0\\
&T_{31}&T_{32}\\
&&T_{34}\\
\end{array}\right]
\left[\begin{array}{ccc}
I&0&0\\
&I&-C\\
&&I\\
\end{array}\right]
=\left[\begin{array}{ccc}
T_1&0&0\\
&T_{31}&0\\
&&T_{34}\\
\end{array}\right]$$

$$\left[\begin{array}{ccc}
I&0&0\\
&I&C\\
&&I\\
\end{array}\right]
\left[\begin{array}{ccc}
0&0&\tilde{X}\\
0&0&0\\
0&0&0\\
\end{array}\right]
\left[\begin{array}{ccc}
I&0&0\\
&I&-C\\
&&I\\
\end{array}\right]
=\left[\begin{array}{ccc}
0&0&\tilde{X}\\
0&0&0\\
0&0&0\\
\end{array}\right]$$

and

$$\left[\begin{array}{ccc}
I&0&0\\
&I&C\\
&&I\\
\end{array}\right]
\left[\begin{array}{ccc}
0&0&0\\
\tilde{Y_1}&0&0\\
\tilde{Y_2}&0&0\\
\end{array}\right]
\left[\begin{array}{ccc}
I&0&0\\
&I&-C\\
&&I\\
\end{array}\right]
=\left[\begin{array}{ccc}
0&0&0\\
\tilde{Y_1}+C\tilde{Y_2}&0&0\\
\tilde{Y_2}&0&0\\
\end{array}\right].$$
Moreover, $\tilde{Y_2}T_1=T_{34}\tilde{Y_2}, T_1\tilde{X}=\tilde{X}T_{34}$, and $T_1$ is similar to a normal
operator, by Lemma \ref{lem5}, $T_{34}$ is similar to a normal operator, which contracts to Theorem \ref{thm22}.
\end{proof}

\begin{cor}
Assume $T$ is an amenable operator, then $M$ is a maximal invariant subspace such that $T|_M$ is similar to a
normal operator if and only if $M$ is the largest invariant subspace such that $T|_M$ is similar to a normal
operator.
\end{cor}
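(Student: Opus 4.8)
The plan is to treat the two implications separately and concentrate essentially all the work on the converse.

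The implication ``largest $\Rightarrow$ maximal'' is purely formal: a subspace that contains every invariant subspace on which $T$ is similar to a normal operator cannot be strictly contained in any such subspace, hence is maximal.

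For ``maximal $\Rightarrow$ largest'' I would start from the previous theorem, which identifies the subspace $N_1$ of Theorem \ref{thm22} with the subspace $M_1$ of Theorem \ref{thm2}. Thus this common subspace is hyperinvariant, it is the \emph{largest} hyperinvariant subspace on which $T$ is similar to a normal operator (Theorem \ref{thm2}(2)), and it is a \emph{maximal} invariant such subspace (Theorem \ref{thm22}(2)); write $p_0' \in P(\mathfrak{A}_T')$ for the idempotent furnished by the decomposition of Theorem \ref{thm22}, so that $\textup{Ran}\,p_0' = N_1$ and $\textup{Ker}\,p_0' = N_2$. Let $M$ be any maximal invariant subspace with $T|_M$ similar to a normal operator. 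I claim $M \subseteq N_1$; granting this, $N_1$ is an invariant normal-similar subspace containing $M$, so maximality of $M$ forces $M = N_1$. Since every invariant normal-similar subspace lies inside a maximal one (Zorn's lemma, exactly as in the proof of Theorem \ref{thm2}, Case 2), it then follows that $N_1$ contains them all, i.e. $N_1 = M$ is the largest.

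To prove $M \subseteq N_1$, equivalently $(I - p_0')M = \{0\}$, I would compress $M$ into $N_2$. Because $p_0'$ commutes with $T$, the map $V = (I - p_0')|_M : M \to N_2$ intertwines $T|_M$ with $T_2 := T|_{N_2}$ (that is, $T_2 V = V\,T|_M$), has $\textup{Ker}\,V = M \cap N_1$, and has range dense in a $T_2$-invariant subspace $K \subseteq N_2$; here $K = \{0\}$ is precisely the assertion $M \subseteq N_1$. Since $T|_M$ is similar to a normal operator and $T_2|_K$ is amenable (a restriction of the amenable operator $T_2$ to an invariant subspace), $V$ supplies the dense-range intertwiner required by Lemma \ref{lem5} for $T_2|_K$.

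The hard part will be the second hypothesis of Lemma \ref{lem5}: a one-to-one intertwiner from $T_2|_K$ into an operator similar to a normal one, so that Lemma \ref{lem5} actually applies and yields that $T_2|_K$ is similar to a normal operator. This is where amenability must be used in full. Following the mechanism of the previous theorem, I would use the complete reductivity of $\mathfrak{A}_T'$ (Lemma \ref{lem 1}) and the production of reverse off-diagonal intertwiners (the device behind \cite[Lemma 4.11]{G2006} employed there) to manufacture the missing one-to-one intertwiner. One then has to upgrade $K$ to the range of an idempotent in $P(\mathfrak{A}_{T_2}'')$, again via complete reductivity, so as to contradict Theorem \ref{thm2}(3), which says no such compression of $T_2$ is similar to a normal operator. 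That contradiction forces $K = \{0\}$, hence $M \subseteq N_1$, and the argument closes. Securing the reverse intertwiner and arranging the contradiction with Theorem \ref{thm2}(3) is the crux; the remaining steps are formal manipulations of the block decomposition of the kind already carried out in Proposition \ref{prop5} and in the previous theorem.
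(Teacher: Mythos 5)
Your reduction of the statement to the single claim $M\subseteq N_1$ is sound, and the ``largest $\Rightarrow$ maximal'' direction is indeed formal; but the proof of that claim --- which you yourself flag as the crux --- is never actually carried out, and the mechanism you propose for it does not apply in your setting. The device of the preceding theorem (producing a reverse off-diagonal intertwiner via \cite[Lemma 4.11]{G2006} and then invoking Lemma \ref{lem5}) takes as input a nonzero off-diagonal \emph{element of} $\mathfrak{A}_T'$ with respect to a decomposition into two $T$-invariant summands; there it exists precisely because one assumes $N_1$ fails to be hyperinvariant. In your situation $N_1=M_1=\textup{Ran}\,p_0$ with $p_0\in P(\mathfrak{A}_T'')$ \emph{is} hyperinvariant, so every element of $\mathfrak{A}_T'$ is upper triangular with respect to $N_1\dot{+}N_2$ and there is no nonzero lower corner for Gifford's lemma to act on; your map $V=(I-p_0')|_M$ is not an element of $\mathfrak{A}_T'$ at all. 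Hence the one-to-one intertwiner required as the second hypothesis of Lemma \ref{lem5} is never produced. Moreover your contradiction target is the wrong one: Theorem \ref{thm2}(3) concerns idempotents in the double commutant $P(\mathfrak{A}_{T_2}'')$, whereas an invariant subspace $K$ of $T_2$, even a complemented one, only yields an idempotent in the commutant $P(\mathfrak{A}_{T_2}')$; the usable statement is Theorem \ref{thm22}(3), and ``upgrading'' $K$ to the range of a double-commutant idempotent is not justified and in general impossible.

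The intended argument is much shorter and bypasses all of this. If $M$ is any maximal invariant subspace on which $T$ is similar to a normal operator, then $M$ itself can serve as the $N_1$ of Theorem \ref{thm22}: amenability of $\mathfrak{A}_T$ provides an invariant algebraic complement (the reduction property), and properties (1)--(4) hold for this decomposition exactly as in the proof of Theorem \ref{thm22}, using only the maximality of $M$. The theorem identifying the two decompositions then applies verbatim to this decomposition and gives $M=M_1$. Thus every maximal invariant normal-similar subspace coincides with the canonical $M_1$ of Theorem \ref{thm2}, and since every invariant normal-similar subspace is contained in some maximal one (the Zorn/Fong argument you cite), $M_1$ is the largest such subspace; both implications of the corollary follow at once. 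If you insist on your compression route, it can be repaired without Lemma \ref{lem5}: take an invariant complement $M'$ of $M$, let $q\in P(\mathfrak{A}_T')$ be the idempotent onto $M$ along $M'$; hyperinvariance of $N_1$ forces $q$ upper triangular, so $K=(I-p_0')M=\textup{Ran}\,q_{22}$ is closed with $q_{22}\in P(\mathfrak{A}_{T_2}')$; then writing $M=(M\cap N_1)\dot{+}W$ with $W$ invariant and applying Proposition \ref{prop6} to the amenable operator $T|_M$ shows that $T_2|_K$ is similar to $T|_W$, hence to a normal operator, contradicting Theorem \ref{thm22}(3) unless $K=\{0\}$. As written, however, the proposal leaves precisely the step where all the content lies unproved.
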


\begin{cor}
Assume $T$ is an amenable operator and which is quasisimilar to a compact operator, then $T$ is similar to a
normal operator.
\end{cor}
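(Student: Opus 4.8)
The plan is to use the quasisimilarity to manufacture compact operators inside the commutant of $T$, and then feed these into the decomposition of Theorem \ref{thm2} to force the ``bad'' summand to vanish. Let $K$ be a compact operator quasisimilar to $T$, so that there are quasi-affinities $V\colon\mathfrak{K}\to\mathfrak{H}$ and $W\colon\mathfrak{H}\to\mathfrak{K}$, each injective with dense range, satisfying $TV=VK$ and $WT=KW$. The case $K=0$ is trivial, since then $WT=0$ and the injectivity of $W$ give $T=0$; so assume $K\neq 0$. First I would observe that for every $A\in\{K\}'$ one has $T(VAW)=(VK)AW=V(AK)W=VA(KW)=(VAW)T$ (using $KW=WT$ and $KA=AK$), so $VAW\in\mathfrak{A}_T'$; moreover $VAW$ is compact whenever $A$ is compact and is nonzero whenever $A\neq 0$, because $V$ is injective and $W$ has dense range. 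In particular $VKW$ is a nonzero compact operator in $\mathfrak{A}_T'$.

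Next I would invoke Theorem \ref{thm2} to write $T=T_1\dot{+}T_2$ with respect to $\mathfrak{H}=M_1\dot{+}M_2$, where $M_1,M_2$ are hyperinvariant, $T_1$ is similar to a normal operator, and, by item (6), $\mathfrak{A}_{T_2}'$ contains no nonzero compact operator. It then suffices to prove $M_2=\{0\}$, for then $T=T_1$ is similar to a normal operator. The strategy is to show that the compact operators just produced survive into $\mathfrak{A}_{T_2}'$, contradicting (6). Let $p_0\in P(\mathfrak{A}_T'')$ be the idempotent with $\textup{Ran}\,p_0=M_1$, $\textup{Ker}\,p_0=M_2$, and put $P=I-p_0$. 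Since $M_1,M_2$ are hyperinvariant, $P$ commutes with every element of $\mathfrak{A}_T'$ and with $T$, so each $VAW$ is block-diagonal along $M_1\dot{+}M_2$ and its $M_2$-block lies in $\mathfrak{A}_{T_2}'$. Writing $V_2=PV\colon\mathfrak{K}\to M_2$ (dense range) and $W_2=W|_{M_2}\colon M_2\to\mathfrak{K}$ (injective), one checks $T_2V_2=V_2K$, $W_2T_2=KW_2$, and that $V_2AW_2\in\mathfrak{A}_{T_2}'$ is compact for every compact $A\in\{K\}'$.

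The main obstacle is that $V_2$ need not remain injective and $W_2$ need not retain dense range, so a priori each $V_2AW_2$ could be zero and no contradiction is reached; I expect this cancellation issue to be the crux. To circumvent it I would pass to a genuine quasi-affine picture: $\mathfrak{K}_0:=\overline{\textup{Ran}\,W_2}$ and $\ker V_2$ are both $K$-invariant (the first because $KW_2=W_2T_2$, the second because $V_2K=T_2V_2$), so $W_2\colon M_2\to\mathfrak{K}_0$ and the map induced by $V_2$ on $\mathfrak{K}/\ker V_2$ are true quasi-affinities intertwining $T_2$ with the compact operators $K_0=K|_{\mathfrak{K}_0}$ and $\hat{K}=K/\ker V_2$. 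Producing a nonzero compact operator in $\mathfrak{A}_{T_2}'$ then reduces to producing a nonzero compact operator intertwining $K_0$ and $\hat{K}$, and here compactness of $K$ should be used decisively: a finite-rank Riesz idempotent of $K$ at a nonzero eigenvalue yields such an intertwiner, while the purely quasinilpotent case $\sigma(K)=\{0\}$ must be handled separately through spectral and amenability considerations.

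Equivalently — and this is the form I would attempt to push through first — one can argue globally without decomposing. Let $L_1$ be the closed span of the ranges of all compact operators in $\mathfrak{A}_T'$ and $L_2$ the intersection of their kernels; by \cite[Lemma 3.1]{R1993} both lie in $\textup{Lat}\,\mathfrak{A}_T'$ and $L_1\dot{+}L_2=\mathfrak{H}$. Since $VKW\neq 0$, $L_1\neq\{0\}$, and on $L_1$ the restriction $T|_{L_1}$ is amenable with commutant containing a sufficient set of compact operators, so $T|_{L_1}$ is similar to a normal operator by \cite[Theorem 9]{R1982}; being hyperinvariant, $L_1\subseteq M_1$ by Theorem \ref{thm2}(2). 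It then remains to see $L_2=\{0\}$. As $L_2\subseteq\bigcap\{\ker(VAW):A\in\{K\}'\text{ compact}\}=W^{-1}\bigl(\bigcap\{\ker A:A\in\{K\}'\text{ compact}\}\bigr)$ and $W$ is injective, the whole matter reduces to the assertion that the compact operators commuting with the nonzero compact operator $K$ have trivial common kernel. Establishing this last fact is precisely the obstacle identified above; granting it, $L_2\subseteq\ker W=\{0\}$, whence $M_1=\mathfrak{H}$ and $T$ is similar to a normal operator.
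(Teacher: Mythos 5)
Your opening move is exactly the paper's: the operator $C=VKW$ is a nonzero compact operator in $\mathfrak{A}_T'$, and that part of your argument is correct. But both of your routes then rest on a claim you explicitly leave unproven --- that the compact operators commuting with the nonzero compact $K$ have trivial common kernel (equivalently, that some nonzero compact intertwiner survives compression to $M_2$). This claim is \emph{false} in general. Take $K$ to be a compact backward weighted shift on $\ell^2$: $Ke_0=0$, $Ke_n=w_{n-1}e_{n-1}$ with $w_n\neq 0$, $w_n\to 0$. Every operator commuting with $K$ is the adjoint of an operator commuting with the injective forward weighted shift $K^*$, and by Shields' commutant theorem the latter is given by a formal power series in $K^*$ whose constant coefficient equals every diagonal entry; if the operator is compact those diagonal entries must tend to $0$, so the constant coefficient vanishes, and then the adjoint annihilates $e_0$. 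Hence every compact operator commuting with this $K$ kills $e_0$, and the common kernel is nonzero. Note that this $K$ is quasinilpotent, i.e. precisely the case you defer to unspecified ``spectral and amenability considerations,'' so the proof as written has a genuine gap at its acknowledged crux.

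The missing observation is that you never need the whole family $\{VAW\}$: the single operator $C=VKW$ suffices, because the intertwining relation $KW=WT$ gives $C=VWT$, and $VW$ is injective, so $\ker C=\ker T$. This is how the paper argues: $C$ is a compact operator in $\mathfrak{A}_{T}'=\mathfrak{A}_{T_1}'\dot{+}\mathfrak{A}_{T_2}'$ by Theorem \ref{thm2}(5), so by Theorem \ref{thm2}(6) its block on $M_2$ vanishes; hence $M_2\subseteq\ker C=\ker T$, i.e. $T_2=0$. Since the zero operator is normal, Theorem \ref{thm2}(3) (applied with $q$ the identity on $M_2$) forces $M_2=\{0\}$, and $T=T_1$ is similar to a normal operator. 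The same one-line fix rescues your own global variant: your identity $\ker(VAW)=W^{-1}(\ker A)$, specialized to $A=K$, gives $L_2\subseteq\ker(VKW)=W^{-1}(\ker K)=\ker T$ (using $KW=WT$ and injectivity of $W$), so $T|_{L_2}=0$ is normal; combined with your argument that $T|_{L_1}$ is similar to a normal operator, Proposition \ref{prop6} finishes the proof, with no statement about common kernels of the commutant of $K$ needed at all.
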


\begin{proof}
Suppose, $TV=VK, WT=KW$ with $V,W$ injective operators having dense ranges and $K$ is a compact operator. Then
$TVKW=VKWT$. Let $C=VKW$, $C\in \mathfrak{A}_{T}'$, and $C$ is a compact operator. According to Theorem
\ref{thm2}, $C$ has the form $\left[\begin{array}{cc}
C_1&\\
&0\\
\end{array}\right]$ respect to the space decomposition in the Theorem. If $Cx=0$, $VWTx=Cx=0$, thus $Tx=0$. It
follows that there is no part of $T_2$, i.e. $T$ is similar to a normal operator.
\end{proof}

\vskip1cm
\section{(Essential) operator valued roots of abelian analytic functions}
In this section, we will study the structure of an operator which is an (essential) operator valued roots of
abelian analytic functions and then
 we get that if  such an operator is also amenable, then it is  similar to a normal operator.
In \cite{G1974} Gilfeather introduce the concept of operator valued roots of abelian analytic functions as
follows: Let $\mathfrak{A}$ is an abelian von Neumann algebra and $\psi(z)$, an $\mathfrak{A}$ valued analytic
function on a domain $\mathcal{D}$ in the complex plane. We may decompose $\mathfrak{A}$ into a direct integral
of factors such that for $A\in\mathfrak{A}$, there exists a unique $g\in L_{\infty}(\wedge, \mu)$ such that
$\mathfrak{A}=\int_\wedge^\oplus g(\lambda)I(\lambda)\mu(d\lambda)$. If $T\in\mathfrak{A}'$ and
$\sigma(T)\subseteq \mathcal{D}$, let
$$\psi(T)=(2\pi i)^{-1}\int_\wedge(T-zI)^{-1}\psi(z)dz.$$
An operator $T$ is called a (essential)roots of the abelian analytic function $\psi$, if $\psi(T)=0$(compact,
respectively). The structure of roots of a locally nonzero abelian analytic function has been given in
\cite{G1974}, in this section we main study the structure of essential roots of a locally nonzero abelian
analytic function.

\begin{lem}\label{lem6}
Assume $T\in\mathfrak{B}(\mathfrak{H})$, $f$ is a locally nonzero analytic function on the neighborhood of
$\sigma(T)$ and assume $f(T)$ is a compact operator, then $T$ is a polynomial compact operator.
\end{lem}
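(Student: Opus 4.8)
The plan is to pass to the Calkin algebra $\mathcal{C}=\mathfrak{B}(\mathfrak{H})/\mathfrak{K}(\mathfrak{H})$, where $\mathfrak{K}(\mathfrak{H})$ denotes the compact operators, and to show that the essential spectrum $\sigma_e(T)=\sigma(\pi(T))$ is a finite set, after which a single polynomial can be produced by a Riesz decomposition. Let $\pi:\mathfrak{B}(\mathfrak{H})\to\mathcal{C}$ be the quotient map and write $B=\pi(T)$. Since $\pi$ is a continuous unital algebra homomorphism and $f$ is holomorphic on a neighborhood of $\sigma(T)\supseteq\sigma(B)$, the holomorphic functional calculus is natural with respect to $\pi$, so $\pi(f(T))=f(B)$. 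Because $f(T)$ is compact, $\pi(f(T))=0$, hence $f(B)=0$ in $\mathcal{C}$. By the spectral mapping theorem, $f(\sigma_e(T))=\sigma(f(B))=\{0\}$, so $f$ vanishes identically on $\sigma_e(T)$.

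Next I would establish finiteness of $\sigma_e(T)$. As $f$ is locally nonzero, its zero set $Z(f)$ is a discrete subset of the open domain on which $f$ is analytic; that is, $Z(f)$ has no accumulation point inside this domain. Since $\sigma_e(T)\subseteq Z(f)$ is compact and contained in the interior of the domain, any accumulation point of $\sigma_e(T)$ would lie in $Z(f)$, contradicting discreteness. Therefore $\sigma_e(T)=\{\lambda_1,\dots,\lambda_k\}$ is finite, and at each $\lambda_i$ the function $f$ vanishes to some finite order $m_i\geq 1$ (again using that $f$ is not identically zero near $\lambda_i$).

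Finally I would run the Riesz functional calculus on $B$ in $\mathcal{C}$. Since $\sigma(B)$ is finite, each $\lambda_i$ is isolated, and the spectral idempotents $e_i=(2\pi i)^{-1}\oint_{\gamma_i}(z-B)^{-1}\,dz$ satisfy $\sum_i e_i=1$, $e_i e_j=\delta_{ij}e_i$, commute with $B$, and yield $\sigma(Be_i)=\{\lambda_i\}$ in the corner algebra $e_i\mathcal{C}e_i$. Locally factoring $f(z)=(z-\lambda_i)^{m_i}g_i(z)$ with $g_i(\lambda_i)\neq 0$ gives $f(B)e_i=(B-\lambda_i)^{m_i}g_i(B)e_i$, where $g_i(B)e_i$ is invertible in $e_i\mathcal{C}e_i$ because $0\notin g_i(\sigma(Be_i))=\{g_i(\lambda_i)\}$. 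From $f(B)=0$ we deduce $f(B)e_i=0$ and hence $(B-\lambda_i)^{m_i}e_i=0$ for each $i$. Setting $p(z)=\prod_{i=1}^{k}(z-\lambda_i)^{m_i}$ and using that each $e_i$ commutes with $B$, we get $p(B)e_i=0$ for all $i$, so $p(B)=\sum_i p(B)e_i=0$. Thus $\pi(p(T))=p(B)=0$, meaning $p(T)$ is compact and $T$ is a polynomial compact operator.

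The main obstacle is the finiteness of $\sigma_e(T)$ together with the finiteness of the vanishing orders $m_i$; this is precisely where the hypothesis that $f$ is locally nonzero is essential, since without it the zero set could accumulate inside $\sigma_e(T)$ or $f$ could vanish to infinite order, and no polynomial factor could be extracted. Once finiteness is secured, the transfer through the Calkin algebra and the Riesz decomposition are routine.
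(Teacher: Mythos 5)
Your proof is correct and takes essentially the same route as the paper: pass to the Calkin algebra, observe that $f(\pi(T))=0$, and then produce a polynomial annihilating $\pi(T)$. The paper's own proof merely asserts that such a polynomial exists once $f$ is locally nonzero; your argument (finiteness of $\sigma_e(T)$ from discreteness of the zero set, finite vanishing orders, and the Riesz idempotent decomposition) supplies exactly the details that assertion omits.
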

\begin{proof}
Let $\widehat{T}$ denote the image of $T$ in the Calkin algebra, then $\widehat{f(T)}=0$. Since $f$ is a locally
nonzero analytic function on $\sigma(T)$, there exists a polynomial $p$ such that $\widehat{p(T)}=0$. i.e. $T$
is a polynomial compact operator.
\end{proof}

\begin{thm}
Let $\psi$ be a locally nonzero abelian analytic function on $\mathcal{D}$ taking values in the von Neumann
algebra $\mathfrak{A}$. If $T$ is an essential roots of $\psi$ and is amenable, then $T$ is similar to a normal
operator.
\end{thm}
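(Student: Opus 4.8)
The plan is to peel off the ``similar to a normal operator'' part of $T$ using the decomposition of Theorem~\ref{thm2} and then to show that the remaining summand, whose commutant contains no nonzero compact operators, must act on the zero space because on it the essential-root hypothesis collapses to an exact-root hypothesis. First I would record that $\psi(T)\in\mathfrak{A}_T'$: since $T\in\mathfrak{A}'$ we have $\mathfrak{A}\subseteq\mathfrak{A}_T'$, so each value $\psi(z)$ commutes with $T$, and each resolvent $(zI-T)^{-1}$ commutes with $T$ as well; hence the integrand $\psi(z)(zI-T)^{-1}$ commutes with $T$ and so does $\psi(T)$.

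Now apply Theorem~\ref{thm2} to the amenable operator $T$, obtaining hyperinvariant subspaces $M_1=\textup{Ran}\,p_0$, $M_2=\textup{Ker}\,p_0$ (with $p_0\in P(\mathfrak{A}_T'')$) and $T=T_1\dot{+}T_2$, where $T_1$ is similar to a normal operator and is the largest hyperinvariant piece with this property (part (2)), $T_2$ is amenable, $\mathfrak{A}_{T_2}'$ contains no nonzero compact operator (part (6)), and $\mathcal{P}(\mathfrak{A}_{T_2}'')$ has uniform multiplicity infinite (part (4)). Because $\psi(T)\in\mathfrak{A}_T'$ commutes with $p_0\in\mathfrak{A}_T''$, the operator $\psi(T)$ leaves $M_2$ invariant and $\psi(T)|_{M_2}\in\mathfrak{A}_{T_2}'$; moreover $\psi(T)|_{M_2}$ coincides with the functional calculus $\psi_2(T_2)$ built from the compression $\psi_2=\psi|_{\mathfrak{A}_2}$ to $\mathfrak{A}_2=\mathfrak{A}|_{M_2}$, since both $\psi(z)$ and $(zI-T)^{-1}$ restrict to $M_2$. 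As the restriction of the compact operator $\psi(T)$ to the complemented invariant subspace $M_2$, the operator $\psi(T)|_{M_2}$ is compact, so part (6) of Theorem~\ref{thm2} forces $\psi_2(T_2)=0$. Thus the amenable operator $T_2$ is an \emph{exact} root of the locally nonzero abelian analytic function $\psi_2$.

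It then remains to show that an amenable exact root of a locally nonzero abelian analytic function is similar to a normal operator. I would decompose $M_2$ as a direct integral over the abelian von Neumann algebra $\mathfrak{A}_2$, writing $T_2=\int_\wedge^\oplus T_2(\lambda)\mu(d\lambda)$ and $\psi_2=\int_\wedge^\oplus \psi_{2,\lambda}\mu(d\lambda)$ with each $\psi_{2,\lambda}$ a scalar, locally nonzero analytic function. From $\psi_2(T_2)=0$ one gets $\psi_{2,\lambda}(T_2(\lambda))=0$ for almost all $\lambda$; since the zeros of a locally nonzero scalar analytic function are isolated (the same mechanism as in Lemma~\ref{lem6}, now with exact vanishing), $\sigma(T_2(\lambda))$ is finite and $T_2(\lambda)$ is algebraic. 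Exactly as in the proof of Theorem~\ref{thm1}, almost every fiber $T_2(\lambda)$ is amenable; an amenable algebraic operator generates a finite-dimensional, hence semisimple, commutative Banach algebra isomorphic to $\mathbb{C}^k$, so $T_2(\lambda)$ is diagonalizable and similar to a normal operator. Using the uniform bound on idempotents from Lemma~\ref{lem 1} to control the diagonalizing similarities, these fiberwise similarities assemble into a single similarity showing that $T_2$ is similar to a normal operator. But $M_2$ is hyperinvariant, so part (2) of Theorem~\ref{thm2} gives $M_2\subseteq M_1$; as $M_1\cap M_2=\{0\}$, this forces $M_2=\{0\}$, whence $T=T_1$ is similar to a normal operator.

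The step I expect to be the main obstacle is the final one: passing from the fiberwise conclusion ``$T_2(\lambda)$ is diagonalizable'' to the global statement that $T_2$ is similar to a normal operator, which requires a measurable and \emph{uniformly bounded} choice of diagonalizing similarities; this is precisely where amenability enters through the uniform idempotent bound of Lemma~\ref{lem 1}, and one could instead invoke the structure theorem for roots of locally nonzero abelian analytic functions from \cite{G1974}. A secondary point needing care is the identification $\psi(T)|_{M_2}=\psi_2(T_2)$ together with the verification that $\psi_2$ remains locally nonzero after compression to $M_2$.
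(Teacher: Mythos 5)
Your reduction is genuinely different from the paper's, and its skeleton is sound. The paper never touches Theorem~\ref{thm2} in this proof: it decomposes the given algebra $\mathfrak{A}$ itself by multiplicity theory into $\sum_n\oplus\mathfrak{B}_n\oplus\mathfrak{B}_\infty$, handles the finite-multiplicity part by Proposition~\ref{prop1}, splits $\mathfrak{B}_\infty$ into its continuous and atomic spectral parts, kills the compact operator $\psi(T_3)$ over the continuous part (a compact decomposable operator over a non-atomic measure must vanish), uses Lemma~\ref{lem6} over the atoms to conclude $T_4$ is a direct sum of polynomially compact operators, and only then invokes \cite[Theorem 2.1]{G1974} together with \cite[Theorems 3.5, 4.5]{Y}. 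Your route instead leverages the already-proved decomposition $T=T_1\dot{+}T_2$: since $\psi(T)\in\mathfrak{A}_T'$ commutes with $p_0\in P(\mathfrak{A}_T'')$, it restricts to a compact operator in $\mathfrak{A}_{T_2}'$, which part (6) of Theorem~\ref{thm2} forces to be zero, so $T_2$ is an exact root of the compressed function $\psi_2$; once $T_2$ is similar to a normal operator, part (2) gives $M_2\subseteq M_1$, hence $M_2=\{0\}$ and $T=T_1$. This is shorter, localizes the use of compactness in one clean step, and the ``secondary'' checks do go through: $\mathfrak{A}\subseteq\mathfrak{A}_T'$ is self-adjoint and commutes with $p_0$, so $M_2$ reduces $\mathfrak{A}$ and $\mathfrak{A}|_{M_2}$ is an abelian von Neumann algebra; $\sigma(T_2)\subseteq\sigma(T)\subseteq\mathcal{D}$ makes $\psi_2(T_2)=\psi(T)|_{M_2}$; and the fibers of $\psi_2$ are (restrictions of) the fibers of $\psi$, so local nonvanishing persists.

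The genuine gap is in your primary argument for the remaining claim, that an amenable \emph{exact} root is similar to a normal operator. The fiberwise part is fine ($T_2(\lambda)$ amenable, $\psi_{2,\lambda}(T_2(\lambda))=0$, hence $\sigma(T_2(\lambda))$ finite, $T_2(\lambda)$ algebraic and, by semisimplicity of amenable finite-dimensional commutative algebras, diagonalizable), but Lemma~\ref{lem 1} does not assemble these into a global similarity: the Riesz idempotents of $T_2(\lambda)$ live in $\mathfrak{A}_{T_2(\lambda)}$ and are not visibly the fibers of idempotents in $\mathfrak{A}_{T_2}''$, so the uniform bound on $P(\mathfrak{A}_{T_2}'')$ says nothing about them; moreover the cardinality of $\sigma(T_2(\lambda))$ and the orders of the zeros of $\psi_{2,\lambda}$ may be unbounded in $\lambda$, and one needs a \emph{measurable}, uniformly bounded selection of diagonalizing similarities. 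Carrying out exactly this assembly is the content of \cite[Theorem 2.1]{G1974}, so the fallback you mention is not optional --- it is the proof. With it (plus Lemma~\ref{lem 3} to see that the pieces $T_2|_{P_n}$, $T_2|_{Q_m}$ are amenable, and \cite[Theorems 3.5, 4.5]{Y} to handle the amenable finite-type spectral and polynomially compact pieces), your argument closes; at that point your endgame coincides with the paper's, and only the reduction to the exact-root case is new.
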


\begin{proof}
Since $\mathfrak{A}$ is an abelian von Neumann algebra, $\mathfrak{A}$ is unitarily equivalent to
$\sum_{n=1}^\infty\oplus \mathfrak{B}_n\oplus\mathfrak{B}_{\infty} $, where $\mathfrak{B}_n$ is a von Neumman
algebra which has
 uniform multiplicity $n$ for all $1\leq n\leq\infty$. Note $T\in \mathfrak{A}'$ is an amenable operator, thus $T=T_1\oplus T_2$,
 where $T_1$ is similar to a normal operator, and $T_1\in(\sum_{n=1}^\infty\oplus \mathfrak{B}_n)',
 T_2\in\mathfrak{B}_{\infty}'$. Let $\sigma_1(\sigma_2)$ denote the continuous  (atom, respectively) parts of the spectrum
  of $\mathfrak{B}_{\infty}$, then $\mathfrak{B}_{\infty}=\mathfrak{C}_{\infty}\oplus\mathfrak{D}_{\infty}$,
  where $\mathfrak{C}_{\infty}$ and $\mathfrak{D}_{\infty}$ are uniform multiplicity $\infty$ von Neumman
algebra and $\sigma(\mathfrak{C}_{\infty})=\sigma_1,\sigma(\mathfrak{D}_{\infty})=\sigma_2$  and $T_2=T_3\oplus
T_4$, where $T_3\in\sigma(\mathfrak{C}_{\infty})', T_4\in\sigma(\mathfrak{D}_{\infty})'$.

Assume $\psi$ is a locally nonzero abelian analytic function on $\mathcal{D}$ and
$\sigma(T)\subseteq\mathcal{D}$,  then $\psi(T)=\psi(T_1)\oplus\psi(T_3)\oplus\psi(T_4)$, note that $\psi(T_3)$
is a compact operator and $\sigma(\mathfrak{C}_{\infty})=\sigma_1$, so $\psi(T_3)=0$. Since
$\mathfrak{D}_{\infty}$ are uniform multiplicity $\infty$ and $\sigma(\mathfrak{D}_{\infty})=\sigma_2$, by Lemma
\ref{lem6}, it follows that $T_4$ is direct sum of polynomial compact operators. According to \cite[Theorem
2.1]{G1974}, there exists a sequence of mutually orthogonal projections $\{P_n, Q_m\}$ in $\mathfrak{A}$ with
$I=\sum P_n+\sum Q_m$ so that $T|_{P_n}$ is finite type spectral operator and $T|_{Q_m}$ is polynomial compact
operator. By \cite[Theorem 3.5, 4.5]{Y}, we get that $T$ is similar to a normal operator.
\end{proof}

\nocite{liyk2,liyk/kua1}

\end{document}